\newtheorem{theorem}{Theorem}
\newtheorem{lemma}{Lemma}
\newtheorem{definition}{Definition}
\newtheorem{corollary}{Corollary}
\newcommand{\co}{{\hspace{0.25mm}:\hspace{0.25mm}}}
\begin{document}
{\selectlanguage{english}
\binoppenalty = 10000 %
\relpenalty   = 10000 %

\pagestyle{headings} \makeatletter
\renewcommand{\@evenhead}{\raisebox{0pt}[\headheight][0pt]{\vbox{\hbox to\textwidth{\thepage\hfill \strut {\small Grigory. K. Olkhovikov}}\hrule}}}
\renewcommand{\@oddhead}{\raisebox{0pt}[\headheight][0pt]{\vbox{\hbox to\textwidth{{Completeness for explicit jstit logic}\hfill \strut\thepage}\hrule}}}
\makeatother

\title{Justification announcements in discrete time. Part II: frame definability results}
\author{Grigory K. Olkhovikov\\Ruhr University Bochum\\
Department of Philosophy II; NAFO 02/299\\
Universit\"{a}tstr. 150, D-44780, Bochum, Germany\\
email: grigory.olkhovikov@rub.de, grigory.olkhovikov@gmail.com}
\date{}
\maketitle

\begin{quote}
\textbf{Abstract}. In Part I of this paper, we presented a
Hilbert-style system $\Sigma_D$ axiomatizing of stit logic of
justification announcements (JA-STIT) interpreted over models with
discrete time structure. In this part, we prove three frame
definability results for $\Sigma_D$ using three different
definitions of a frame plus a yet another version of completeness
result.
\end{quote}

\begin{quote}
stit logic, justification logic, strong completeness, frame
definability
\end{quote}

\section{Introduction}

The so-called stit logic of justification announcements (JA-STIT,
for short) was introduced in \cite{OLexpl-gen} as an explicit
fragment of the richer logic of $E$-notions introduced in
\cite{OLWA2}. The underlying idea was to interpret the proving
activity as an activity that results in presenting (or, as it
were, \emph{demonstrating}) proofs to the community thus making
them publicly available within this community. JA-STIT borrows the
representation of proofs which get presented to the community in
this way from justification logic by S. Artemov et al.
\cite{ArtemovN05}, whereas the model for the agentive activities
within the community is based on stit logic by N. Belnap et al.
\cite{belnap2001facing}. Both of these logics are imported into
JA-STIT rather explicitly, which leads to the presence of a full
set of their respective modalities in the JA-STIT language. In a
similar fashion, the intended models for JA-STIT contain the full
set of structural elements present in the models for both
justification logic and stit logic.

The interaction between agents and proofs is then provided for by
a common pool of proofs publicly announced within the community.
This model for justification announcements is inspired by a rather
common occurrence when a group of agents tries to produce a proof
working on a shared whiteboard. In JA-STIT this situation is
modelled in an idealized form, so that one abstracts away from (1)
the other available media (like private notes, private messages,
etc.), (2) the natural limitations of the actual whiteboard (like
limited space and the necessity to erase old proofs), and (3) from
the natural limitations of the agents' communication capacities
(like bad handwriting on the part of presenting agents or
short-sightedness on the part of spectators).

The state of the common body of publicly presented proofs, or of
the community whiteboard, as we will sometimes call it, is
described in JA-STIT by modality $Et$, where $t$ is an arbitrary
proof polynomial of justification logic. The informal
interpretation of $Et$ is that the proof $t$ is presented to the
community, or that $t$ is on the whiteboard. This reading also
explains the choice of $E$ as notation for this modality, since it
serves as a sort of existence predicate for the pool of proofs
publicly announced within the community.

The axiomatization of JA-STIT w.r.t. the full class of its
intended models was given in \cite{OLexpl-gen} in the form of
Hilbert-style axiomatic system $\Sigma$. At the same time,
Proposition 1 of \cite{OLexpl-gen} showed that, rather
surprisingly, this axiomatization is sensitive to the temporal
structure of the underlying models, even though neither stit
logic, nor justification logic, nor else $Et$-modality seem to be
relevant to temporal logic, and the standard temporal modalities
prove undefinable within JA-STIT. Nevertheless, it turned out that
once the class of underlying models is restricted to the models
based on discrete time, the axiomatization is no longer complete.
The first part of this paper focuses on finding a strongly
complete axiomatization $\Sigma_D$ of JA-STIT over the subclass of
its intended models which are based on discrete time.  We also
found a number of less restrictive classes of models in the
process --- which all induce the same set of validities as the
models with discrete temporal substructure. This result shows that
one cannot \textbf{enforce} a discrete temporal substructure onto
a model by simply postulating an appropriate set of JA-STIT
validities. A natural follow-up question then presents itself,
namely, how much of a structure can be enforced on the underlying
model by simply postulating the set of theorems of $\Sigma_D$.
Given that JA-STIT is a variant of modal propositional logic, it
is more productive to recast this question in terms of frame
definability rather than model definability. In this way, we ask:
\begin{quote}
\textbf{Main question}. Assuming all the theorems of $\Sigma_D$
are valid over the class of models based on a given frame $F$,
what can be said about $F$ itself?
\end{quote}

The exact meaning of this question clearly depends on how we
define the notion of a frame on which a given model is based.
Indeed, if we are primarily interested in what our axiomatization
has to say about temporal sub-structure of the underlying frame,
we need to include at least the set of moments woven together by a
temporal accessibility relation. This gives us what we call a
temporal frame; but we will show below that the restriction on the
class of frames induced by the set of $\Sigma_D$ theorems is not
affected at all if we also add the choice function to the frame
structure thus extending a temporal frame to a stit frame. By
contrast, the situation changes dramatically if we further add to
a stit frame the epistemic accessibility relations as these can
interact with the stit substructure of the frame in most intricate
and subtle ways. In this way we obtain a justification stit frame
and another frame definability theorem which is very different
from the similar results for temporal and stit frames.

The layout of the rest of the paper is as follows. In Section
\ref{basic} we define the language and the semantics of the logic
at hand. We then define the three versions of the frame notion
mentioned in the previous paragraph and consider some natural
subclasses in both frame types relevant to the main part of the
paper (these will appear in the results presented in Section
\ref{main}). Next, we recall the definition of $\Sigma_D$ and
recapitulate, without a proof, some results from Part I to be used
in this second part as well.

Section \ref{main} proves the frame definability results for the
three versions of a frame notion. Additionally, we identify yet
another class of models w.r.t. which our axiomatization is
complete, this time using the notion of justification stit frame.
Section \ref{conclusion} gives some conclusions and drafts
directions for future work.

In what follows we will be assuming, due to space limitations, a
basic acquaintance with both stit logic and justification logic.
We refer the readers to \cite{sep-logic-justification} for a quick
introduction into the basics of stit logic, and \cite[Ch.
2]{horty2001agency} for the same w.r.t. justification logic.

\section{Preliminaries}\label{basic}

\subsection{Basic definitions and notation}
We fix some preliminaries. First we choose a finite set $Ag$
disjoint from all the other sets to be defined below. Individual
agents from this set will be denoted by letters $i$ and $j$. Then
we fix countably infinite sets $PVar$ of proof variables (denoted
by $x,y,z$) and $PConst$ of proof constants (denoted by $c,d$).
When needed, subscripts and superscripts will be used with the
above notations or any other notations to be introduced in this
paper. Set $Pol$ of proof polynomials is then defined by the
following BNF:
$$
t := x \mid c \mid s + t \mid s \times t \mid !t,
$$
with $x \in PVar$, $c \in PConst$, and $s,t$ ranging over elements
of $Pol$. In the above definition $+$ stands for the \emph{sum} of
proofs, $\times$ denotes \emph{application} of its left argument
to the right one, and $!$ denotes the so-called
\emph{proof-checker}, so that $!t$ checks the correctness of proof
$t$.

In order to define the set $Form^{Ag}$ of formulas we fix a
countably infinite set $Var$ of propositional variables to be
denoted by letters $p,q$. Formulas themselves will be denoted by
letters $A,B,C,D$, and the definition of $Form^{Ag}$ is supplied
by the following BNF:
\begin{align*}
A := p \mid A \wedge B \mid \neg A \mid [j]A \mid \Box A \mid t\co
A \mid KA \mid Et,
\end{align*}
with $p \in Var$, $j \in Ag$ and $t \in Pol$.

It is clear from the above definition of $Form^{Ag}$ that we are
considering a version of modal propositional language. As for the
informal interpretations of modalities, $[j]A$ is the so-called
cstit action modality and $\Box$ is the historical necessity
modality, both modalities are borrowed from stit logic. The next
two modalities, $KA$ and $t\co A$, come from justification logic
and the latter is interpreted as ``$t$ proves $A$'', whereas the
former is the strong epistemic modality ``$A$ is known''.

We assume $\Diamond$ as notation for the dual modality of $\Box$.
As usual, $\omega$ will denote the set of natural numbers
including $0$, ordered in the natural way.

For the language at hand, we assume the following semantics. A
justification stit (jstit) model for $Ag$ is a structure
$$
\mathcal{M} = \langle Tree, \unlhd, Choice, Act, R, R_e,
\mathcal{E}, V\rangle
$$
such that:
\begin{enumerate}
\item $Tree$ is a non-empty set. Elements of $Tree$ are called
\emph{moments}.

\item $\unlhd$ is a partial order on $Tree$ for which a temporal
interpretation is assumed. We will also freely use notations like
$\unrhd$, $\lhd$, and $\rhd$ to denote the inverse relation and
the irreflexive companions.\footnote{A more common notation $\leq$
is not convenient for us since we also widely use $\leq$ in this
paper to denote the natural order relation between elements of
$\omega$.}

\item $Hist$ is a set of maximal chains in $Tree$ w.r.t. $\unlhd$.
Since $Hist$ is completely determined by $Tree$ and $\unlhd$, it
is not included into the structure of model as a separate
component. Elements of $Hist$ are called \emph{histories}. The set
of histories containing a given moment $m$ will be denoted $H_m$.
The following set:
$$
MH(\mathcal{M}) = \{ (m,h)\mid m \in Tree,\, h \in H_m \},
$$
called the set of \emph{moment-history pairs}, will be used to
evaluate formulas of the above language.

\item $Choice$ is a function mapping $Tree \times Ag$ into
$2^{2^{Hist}}$ in such a way that for any given $j \in Ag$ and $m
\in Tree$ we have as $Choice(m,j)$ (to be denoted as $Choice^m_j$
below) a partition of $H_m$. For a given $h \in H_m$ we will
denote by $Choice^m_j(h)$ the element of partition $Choice^m_j$
containing $h$.

\item $Act$ is a function mapping $MH(\mathcal{M})$ into
$2^{Pol}$.

\item $R$ and $R_e$ are two pre-order on $Tree$ giving two
versions of epistemic accessibility relation. They are assumed to
be connected by the inclusion $R \subseteq R_e$.

\item $\mathcal{E}$ is a function mapping $Tree \times Pol$ into
$2^{Form}$ called \emph{admissible evidence function}.

\item $V$ is an evaluation function, mapping the set $Var$ into
$2^{MH(\mathcal{M})}$.
\end{enumerate}

Furthermore, a jstit model has to satisfy a number of additional
constraints. In order to facilitate their exposition, we introduce
a couple of useful notations first. For a given $m \in Tree$ and
any given $h, g \in H_m$ we stipulate that:
$$
Act_m := \bigcap_{h \in H_m}(Act(m,h),
$$
and:
$$
h \approx_m g \Leftrightarrow (\exists m' \rhd m)(h, g \in
H_{m'}).
$$
Whenever we have $h \approx_m g$, we say that $h$ and $g$ are
\emph{undivided} at $m$.

The list of constraints on jstit models then looks as follows:

\begin{enumerate}
\item Historical connection:
$$
(\forall m,m_1 \in Tree)(\exists m_2 \in Tree)(m_2 \unlhd m \wedge
m_2 \unlhd m_1).
$$

\item No backward branching:
$$
(\forall m,m_1,m_2 \in Tree)((m_1 \unlhd m \wedge m_2 \unlhd m)
\Rightarrow (m_1 \unlhd m_2 \vee m_2 \unlhd m_1)).
$$

\item No choice between undivided histories:
$$
(\forall m \in Tree)(\forall h,h' \in H_m)(h \approx_m h'
\Rightarrow Choice^m_j(h) = Choice^m_j(h'))
$$
for every $j \in Ag$.

\item Independence of agents:
$$
(\forall m\in Tree)(\forall f:Ag \to 2^{H_m})((\forall j \in
Ag)(f(j) \in Choice^m_j) \Rightarrow \bigcap_{j \in Ag}f(j) \neq
\emptyset).
$$

\item Monotonicity of evidence:
$$
(\forall t \in Pol)(\forall m,m' \in Tree)(R_e(m,m') \Rightarrow
\mathcal{E}(m,t) \subseteq \mathcal{E}(m',t)).
$$

\item Evidence closure properties. For arbitrary $m \in Tree$,
$s,t \in Pol$ and $A, B \in Form$ it is assumed that:
\begin{enumerate}
\item $A \to B \in \mathcal{E}(m,s) \wedge A \in \mathcal{E}(m,t)
\Rightarrow B \in \mathcal{E}(m,s\times t)$;

\item $\mathcal{E}(m,s) \cup \mathcal{E}(m,t) \subseteq
\mathcal{E}(m,s + t)$. \item $A \in \mathcal{E}(m,t) \Rightarrow
t:A \in \mathcal{E}(m,!t)$;
\end{enumerate}

\item Expansion of presented proofs:
$$
(\forall m,m' \in Tree)(m' \lhd m \Rightarrow \forall h \in H_m
(Act(m',h) \subseteq Act(m,h))).
$$

\item No new proofs guaranteed:
$$
(\forall m \in Tree)(Act_m \subseteq \bigcup_{m' \lhd m, h \in
H_m}(Act(m',h))).
$$

\item Presenting a new proof makes histories divide:
$$
(\forall m \in Tree)(\forall h,h' \in H_m)(h \approx_m h'
\Rightarrow (Act(m,h) = Act(m,h'))).
$$

\item Future always matters:
$$
\unlhd \subseteq R.
$$

\item Presented proofs are epistemically transparent:
$$
(\forall m,m' \in Tree)(R_e(m,m') \Rightarrow (Act_m \subseteq
Act_{m'})).
$$
\end{enumerate}
The components like $Tree$, $\unlhd$, $Choice$ and $V$ are
inherited from stit logic, whereas $R$, $R_e$ and $\mathcal{E}$
come from justification logic. The only new component is $Act$
which represents the above-mentioned common pool of proofs
demonstrated to the community or the state of the community
whiteboard at any given moment under a given history. When
interpreting $Act$, we invoke the classical stit distinction
between dynamic (agentive) and static (moment-determinate)
entities, assuming that the presence of a given proof polynomial
$t$ on the community whiteboard only becomes an accomplished fact
at $m$ when $t$ is present in $Act(m,h)$ for \emph{every} $h \in
H_m$. On the other hand, if $t$ is in $Act(m,h)$ only for
\emph{some} $h \in H_m$ this means that $t$ is rather in a dynamic
state of \emph{being presented}, rather than being present, to the
community.

Due to space limitations, we skip the explanation of the
intuitions behind jstit models. The interested reader may find
such an explanation either in Section 2 of Part I of this paper,
or in \cite[Section 3]{OLWA}.

For the members of $Form^{Ag}$, we will assume the following
inductively defined satisfaction relation. For every jstit model
$\mathcal{M} = \langle Tree, \unlhd, Choice, Act, R, R_e,
\mathcal{E}, V\rangle$ and for every $(m,h) \in MH(\mathcal{M})$
we stipulate that:
\begin{align*}
&\mathcal{M}, m, h \models p \Leftrightarrow (m,h) \in
V(p);\\
&\mathcal{M}, m, h \models [j]A \Leftrightarrow (\forall h'
\in Choice^m_j(h))(\mathcal{M}, m, h' \models A);\\
&\mathcal{M}, m, h \models \Box A \Leftrightarrow (\forall h'
\in H_m)(\mathcal{M}, m, h' \models A);\\
&\mathcal{M}, m, h \models KA \Leftrightarrow \forall m'\forall
h'(R(m,m') \& h' \in H_{m'} \Rightarrow \mathcal{M}, m', h'
\models A);\\
&\mathcal{M}, m, h \models t\co A \Leftrightarrow A \in
\mathcal{E}(m,t) \& (\forall m' \in Tree)(R_e(m,m') \& h' \in
H_{m'} \Rightarrow \mathcal{M}, m', h'
\models A);\\
&\mathcal{M}, m, h \models Et \Leftrightarrow t \in Act(m,h).
\end{align*}
In the above clauses we assume that $p \in Var$; we also assume
standard clauses for Boolean connectives.

\subsection{Frames and their subclasses}

In a modal propositional context, it is customary to consider
frames alongside models, and frames are normally defined as
reducts of the models to components not involving any linguistic
entities. In this way, within the classical modal logic a frame is
a model minus the evaluation for propositional variables. Thus, in
stit logic, a frame will contain $Tree$, $\unlhd$ and $Choice$ but
omit $V$. In pure justification logic the situation is slightly
more complicated, since also the admissible evidence function
$\mathcal{E}$ invokes polynomials and sets of formulas. Therefore,
in \cite{ArtemovN05} a justification frame is just a set of worlds
pre-ordered by the two epistemic accessibility relations; it does
not contain $\mathcal{E}$ which will rather be construed as a part
of a model based on this frame. When we turn to jstit models, we
find $Act$ as a further language-dependent component. Even though
one can argue that with $Act$ we enter a sort of grey area as
compared to $\mathcal{E}$, since $Act$ invokes proof polynomials
but not formulas, in the context of JA-STIT it is clear that $Act$
must be outside of frame structure. Indeed, one of the traditional
distinctions between frames and models would be that one can
evaluate formulas of a given language in models but not in frames.
An exception is made for $0$-ary connectives like $\bot$ and
$\top$ and the formulas built from these connectives. Now, within
the context of JA-STIT one can evaluate every formula of the form
$Et$ for $t \in Pol$ using $Act$ alone, and it would be tough to
argue that such formulas are just another example of $0$-ary
connectives.

Having these considerations in mind, we define our frame notions
as follows. If $\mathcal{M} = \langle Tree, \unlhd, Choice, Act,
R, R_e, \mathcal{E}, V\rangle$ is a jstit model for $Ag$, then $F
= \langle Tree, \unlhd, Choice, R, R_e\rangle$ is a
\emph{justification stit} (or jstit, for short) \emph{frame} for
$Ag$, and $\mathcal{M}$ is said to be based on $F$. Similarly, we
define that $C = \langle Tree, \unlhd, Choice\rangle$ (resp. $T =
\langle Tree, \unlhd\rangle$) is a \emph{stit frame} (resp.
\emph{temporal} frame) for $Ag$. In this case, both $\mathcal{M}$
and $F$ as defined above are said to be based on $C$ (resp. $T$).
Given a class $\mathcal{F}$ of jstit (resp. stit, temporal)
frames, we will denote the class of jstit models based on the
frames from $\mathcal{F}$ by $Mod(\mathcal{F})$. We also observe,
that notations like $Hist(F)$ and $MH(F)$ still make perfect sense
when $F$ is a jstit, stit, or temporal frame.

An important subsclass of jstit frames is made up of what we will
call \emph{unirelational} jstit frames. These are the frames
satisfying the additional constraint that $R_e \subseteq R$. It is
known (see \cite{ArtemovN05}) that switching from the full class
of models to the unirelational models (that is to say, to the
models based on unirelational frames) in the context of pure
justification logic still leaves us with a class of models w.r.t.
which the logic is complete. We have shown (in \cite{OLexpl-gen})
that the same holds for JA-STIT over the general class of models
and (in Part I of this paper) that the situation does not change
when one restricts attention to the models based on any class of
stit frames considered in Part I of this paper. In this second
part, we will also show that this observation holds good for the
class of models based on regular jstit frames. Whenever
$\mathcal{F}$ is a class of jstit frames, we will denote $\{ F \in
\mathcal{F}\mid F\textup{ is unirelational} \}$ by
$\mathcal{F}\downarrow$. Similarly, whenever $\mathcal{C}$ is a
class of stit or temporal frames we will denote by
$Mod^{\downarrow}(\mathcal{C})$ the class of unirelational jstit
models based on frames from $\mathcal{C}$.

Before we move on, we need to introduce the notation for an
immediate $\lhd$-successor of a given moment as it will play an
important part in the frame restrictions to be considered below.
So whenever $C = \langle Tree, \unlhd, Choice\rangle$ is a stit
frame and $m, m' \in Tree$, we set that:
$$
Next(m,m') \Leftrightarrow (m \lhd m' \& (\forall m'' \lhd m')(m''
\unlhd m)).
$$

We now remind the reader the definition of a mixed successor stit
frame originally given in Part I:

\begin{definition}\label{classes-stit}
Let $C = \langle Tree, \unlhd, Choice\rangle$ be a stit frame.
Then we say that $C$ is a \textbf{mixed successor} frame iff for
all $m,m_1 \in Tree$ it is true that:
\begin{align}
&[m \lhd m_1 \Rightarrow (\exists m_2 \unlhd m_1)(Next(m, m_2))]
\vee [(\forall h,g \in H_m)(h \approx_m
g)]\label{mixsucc}\tag{\text{mixsucc}}
\end{align}
\end{definition}

We will denote the class of  mixed successor stit frames by
$\mathcal{C}_{mixsucc}$, and the same class restricted to the stit
frames for a given community $Ag$ by $\mathcal{C}^{Ag}_{mixsucc}$.
Since condition \eqref{mixsucc} does not invoke $Choice$ function
of a stit frame, it makes perfect sense for temporal frames as
well. Therefore, we will denote by $\mathcal{T}_{mixsucc}$ the
class of mixed successor temporal frames and by
$\mathcal{T}^{Ag}_{mixsucc}$ the class of such frames for $Ag$.

We now proceed to defining the restriction used in our jstit frame
definability result. First we need one further technical notion:

\begin{definition}\label{sigma}
Let $F = \langle Tree, \unlhd, Choice, R, R_e\rangle$ be a jstit
frame and let $m \in Tree$. We define $\Theta_m \subseteq
2^{2^{Tree}}$ setting that $S \subseteq Tree$ is in $\Theta_m$ iff
all of the following conditions hold:
\begin{enumerate}
\item $m \in S$;

\item $(\forall m_1,m_2 \in Tree)((m_1 \in S \& R_e(m_1,m_2))
\Rightarrow m_2 \in S)$;

\item $(\forall m_1 \in Tree)[(\forall h \in H_{m_1})(\exists m_2
\in h)(Next(m_1,m_2)\& m_2 \in S) \Rightarrow m_1 \in S]$;

\item $(\forall m_1 \in Tree)([m_1 \in S \& (\forall m_2 \lhd
m_1)\exists m_3(m_2 \lhd m_3 \lhd m_1)] \Rightarrow (\exists m_4
\lhd m_1)(m_4 \in S))$.
\end{enumerate}
\end{definition}

We give one important consequence of the above definition as a
lemma:
\begin{lemma}\label{theta}
Let $F = \langle Tree, \unlhd, Choice, R, R_e\rangle$ be a jstit
frame, let $m \in Tree$, and let $S \in \Theta_m$. Then:
$$
(\forall m_0 \in Tree)(m_0 \in S \Rightarrow (\exists m_1 \in
Tree)(m_1 \lhd m_0)).
$$
\end{lemma}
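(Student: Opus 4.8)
The plan is to argue by contradiction, and the entire argument will rest on the fourth clause in the definition of $\Theta_m$ (Definition~\ref{sigma}). Suppose, for contradiction, that some $m_0 \in S$ has no strict $\lhd$-predecessor in $Tree$, that is, that there is no $m_1 \in Tree$ with $m_1 \lhd m_0$.

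First I would observe that, under this assumption, the set $\{ m_2 \in Tree \mid m_2 \lhd m_0 \}$ is empty. Consequently, the clause $(\forall m_2 \lhd m_0)\exists m_3(m_2 \lhd m_3 \lhd m_0)$, being a universal quantification over the empty set, is vacuously true.

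Next I would apply the fourth condition of Definition~\ref{sigma} with $m_1 := m_0$. Its antecedent requires both that $m_0 \in S$, which holds by assumption, and that the density clause $(\forall m_2 \lhd m_0)\exists m_3(m_2 \lhd m_3 \lhd m_0)$ hold, which we have just verified vacuously. Hence the consequent yields some $m_4 \lhd m_0$ with $m_4 \in S$; in particular there exists $m_4 \in Tree$ with $m_4 \lhd m_0$, directly contradicting our assumption that $m_0$ has no strict predecessor. This contradiction establishes the lemma, and in fact delivers the stronger conclusion that the predecessor can be found inside $S$.

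The only subtlety --- more a matter of not over-thinking the problem than a genuine obstacle --- is to notice that it is precisely the vacuous satisfaction of the density clause at a $\lhd$-minimal point that turns the fourth condition into an engine producing a predecessor. One is tempted to invoke the historical connection constraint or the remaining three clauses of Definition~\ref{sigma}, but none of these is needed: the fourth clause alone does all the work.
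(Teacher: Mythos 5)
Your proof is correct and rests, exactly as the paper's does, on clause 4 of Definition~\ref{sigma} applied at the putative $\lhd$-minimal point $m_0$: the paper runs the argument contrapositively (the consequent of clause 4 fails outright, so the density clause must fail, and its failure itself produces a predecessor $m_2 \lhd m_0$), while you run it forward (the density clause holds vacuously, so the consequent produces a predecessor $m_4 \lhd m_0$). The two are the same argument traversed in opposite directions, so no further comparison is needed.
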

\begin{proof}
Assume, for contradiction, that $m_0 \in S$ but there is no moment
$m_1$ such that $m_1 \lhd m_0$. Then, by contraposition of
Definition \ref{sigma}.4, we must have that:
$$
\neg(\forall m_2 \lhd m_0)\exists m_3(m_2 \lhd m_3 \lhd m_0),
$$
whence, pushing the negation inside, we get that:
$$
(\exists m_2 \lhd m_0)\forall m_3(m_2 \lhd m_3 \Rightarrow \neg
m_3 \lhd m_0).
$$
In particular, for any such $m_2$ we will have $m_2 \lhd m_0$ and
thus we have got our contradiction in place.
\end{proof}
Lemma \ref{theta} shows that for a given $m \in Tree$ the family
$\Theta_m$ may end up being empty, for example, when we have
$R_e(m,m')$ and $m'$ is the $\unlhd$-least moment in $Tree$. On
the other hand, in case when $Tree$ has no $\unlhd$-least moment,
$\Theta_m$ is always non-empty, since we will have $Tree \in
\Theta_m$ for all moments $m$. However, within this paper we will
be mostly interested in less trivial configurations of $\Theta_m$
families:

\begin{definition}\label{classes-jstit}
Let $F = \langle Tree, \unlhd, Choice, R, R_e\rangle$ be a jstit
frame. Then we say that $F$ is \textbf{regular} iff the following
holds for all $m, m_1 \in Tree$:
\begin{align}
&\{m \lhd m_1 \& (\exists S \in \bigcap_{m \lhd m_0 \unlhd
m_1}\Theta_{m_0})(m \notin S \&\notag\\
&\&(\exists h' \in H_m)((\forall g \in H_{m_1})(h' \not\approx_m g) \& (\forall m' \in h')[Next(m,m') \Rightarrow m' \notin S])\} \Rightarrow\notag\\
&\quad\qquad\qquad\qquad\qquad\qquad\qquad\qquad\qquad\Rightarrow
(\exists m_2 \unlhd m_1)(Next(m,m_2))\label{reg}\tag{\text{reg}}
\end{align}
\end{definition}
Just as with the \eqref{mixsucc} restriction on stit frames, we
introduce the notation $\mathcal{F}_{reg}$ for the class of
regular jstit frames and the notation $\mathcal{F}^{Ag}_{reg}$ for
the class of regular jstit frames for a given agent community
$Ag$.

Before we move on to actually proving something, we mention a
couple of technical lemmas which were proved in Part I and will be
used here without a proof:
\begin{lemma}\label{technical}
Let $C = \langle Tree, \unlhd, Choice \rangle$ be a stit frame.
Then:

\begin{enumerate}
\item $(\forall m \in Tree)(\forall h \in H_m)(\exists m'(m' \rhd
m) \Rightarrow (\exists m'' \rhd m)(h \in H_{m''}))$;

\item $(\forall m, m' \in Tree)(m \unlhd m' \Rightarrow H_{m'}
\subseteq H_m)$;

\item $\approx_m$ is an equivalence relation for every $m \in
Tree$.
\end{enumerate}
\end{lemma}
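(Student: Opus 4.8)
The plan is to establish the three clauses in order, since clause 3 will rely on clauses 1 and 2. Throughout, the workhorse is the maximality of histories as $\unlhd$-chains: to prove that a moment lies on a given history $h$, it suffices to show that the moment is $\unlhd$-comparable to every element of $h$, for then adjoining it to $h$ would either leave $h$ unchanged or else produce a chain properly extending $h$, contradicting maximality.

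For clause 1, I would fix $h \in H_m$ and assume some $m' \rhd m$ exists. Since $h$ is a chain containing $m$, every element of $h$ is $\unlhd$-comparable to $m$; so if no element of $h$ were strictly above $m$, then $m$ would be the $\unlhd$-greatest element of $h$, and every $k \in h$ would satisfy $k \unlhd m \lhd m'$. Then $h \cup \{m'\}$ would be a chain properly extending $h$, contradicting maximality. Hence some $m'' \in h$ has $m'' \rhd m$, and since $m'' \in h$ we obtain $h \in H_{m''}$, as required. For clause 2, I would fix $m \unlhd m'$ and $h \in H_{m'}$ and argue that $m$ is comparable to every $k \in h$: any such $k$ is comparable to $m'$, and if $m' \unlhd k$ then $m \unlhd m' \unlhd k$, while if $k \unlhd m'$ then, since also $m \unlhd m'$, the \emph{No backward branching} constraint forces $k$ and $m$ to be comparable. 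Maximality of $h$ then yields $m \in h$, i.e. $h \in H_m$, giving $H_{m'} \subseteq H_m$.

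For clause 3, symmetry is immediate from the symmetric form of the defining condition for $\approx_m$, and reflexivity for any $h \in H_m$ at a moment possessing a proper $\rhd$-successor is exactly the content of clause 1 taken with $g = h$. The substantive step is transitivity: assuming $h \approx_m g$ and $g \approx_m k$, I would pick witnesses $m_1, m_2 \rhd m$ with $h, g \in H_{m_1}$ and $g, k \in H_{m_2}$. Since $g$ contains both $m_1$ and $m_2$ and $g$ is a chain, $m_1$ and $m_2$ are comparable; assuming without loss of generality that $m_1 \unlhd m_2$, clause 2 gives $H_{m_2} \subseteq H_{m_1}$, so that $k \in H_{m_1}$ as well. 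Then $m_1 \rhd m$ simultaneously witnesses $h, k \in H_{m_1}$, hence $h \approx_m k$.

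I expect the main structural obstacle to be precisely this merging step in transitivity, where the two witnessing moments must be reconciled into a single one via their comparability along $g$ together with clause 2. The one genuinely delicate point is reflexivity at a $\unlhd$-maximal moment $m$: there clause 1 is vacuous, and \emph{No backward branching} makes the set $\{m'' \mid m'' \unlhd m\}$ a chain, so $H_m$ collapses to the singleton $\{\{m'' \mid m'' \unlhd m\}\}$. In that degenerate configuration reflexivity of $\approx_m$ must be secured either by the convention that the frames under consideration carry no $\unlhd$-maximal moments or by reading the statement as pertaining to such singleton domains; this is the only place where the argument does not run uniformly, but it presents no serious difficulty.
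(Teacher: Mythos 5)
The paper does not reprove this lemma -- it is imported from Part~I and stated here ``without a proof'' -- so there is no in-text argument to compare against; judged on its own terms, your proof is correct and is the standard one. Clauses 1 and 2 are handled exactly as one would expect, via maximality of histories as $\unlhd$-chains (with \emph{No backward branching} doing the work of making $m$ comparable to the elements of $h$ below $m'$ in clause~2), and your transitivity argument -- reconciling the two witnessing moments $m_1, m_2$ through their comparability along the shared history $g$ and then applying clause~2 -- is the right merging step. You are also right to flag reflexivity at a $\unlhd$-maximal moment as the one place where the statement, read literally, fails: if no $m' \rhd m$ exists then $h \approx_m h$ is false for the unique $h \in H_m$, so $\approx_m$ is the empty relation on a singleton and is not reflexive. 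This is an imprecision in the lemma as stated rather than a gap in your argument; note that every use of clause~3 in the present paper (in Lemmas \ref{stit-falsify} and \ref{jstit-falsify} and Theorem \ref{soundness}) invokes only symmetry and transitivity, or reflexivity at moments already known to have a proper successor, so the edge case is harmless. Your proof is acceptable as written.
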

\begin{lemma}\label{technical-new}
Let $\mathcal{M} = \langle Tree, \unlhd, Choice, Act, R, R_e,
\mathcal{E}, V \rangle$ be a jstit model. Then:
$$
(\forall m,m' \in Tree)(\forall h \in H_{m'})(\forall t \in Pol)(m
\lhd m' \& t\in Act(m,h') \Rightarrow t \in Act_{m'}).
$$
\end{lemma}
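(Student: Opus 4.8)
The plan is to unfold the definition $Act_{m'} = \bigcap_{g \in H_{m'}} Act(m',g)$ and to establish membership of $t$ in each factor of this intersection. So I would fix $m \lhd m'$, a history $h' \in H_{m'}$, and a polynomial $t$ with $t \in Act(m,h')$, and then aim to show $t \in Act(m',g)$ for \emph{every} $g \in H_{m'}$. Note first that $H_{m'} \subseteq H_m$ by Lemma~\ref{technical}.2 (since $m \unlhd m'$), so that all the quantities $Act(m,g)$ and the undividedness relation $\approx_m$ are well defined on the histories of $H_{m'}$.

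The key observation, which I expect to be the only non-routine point, is that every $g \in H_{m'}$ is undivided from $h'$ at $m$. Indeed, since $m \lhd m'$ we have $m' \rhd m$, and both $h'$ and $g$ belong to $H_{m'}$; hence the moment $m'$ itself witnesses $h' \approx_m g$ according to the definition of undividedness. I would then invoke the constraint \emph{Presenting a new proof makes histories divide} at $m$, which yields $Act(m,h') = Act(m,g)$, and therefore $t \in Act(m,g)$ for every $g \in H_{m'}$.

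It remains to lift these memberships from level $m$ to level $m'$. Since $m \lhd m'$ and $g \in H_{m'}$, the constraint \emph{Expansion of presented proofs} gives $Act(m,g) \subseteq Act(m',g)$, whence $t \in Act(m',g)$. As $g \in H_{m'}$ was arbitrary, $t$ lies in every factor of the intersection defining $Act_{m'}$, i.e.\ $t \in Act_{m'}$, as required.

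Beyond correctly spotting that undividedness at $m$ holds uniformly across all of $H_{m'}$ (with $m'$ serving as the common witness), I do not anticipate any real difficulty: this is precisely what allows \emph{Presenting a new proof makes histories divide} to homogenize the values $Act(m,\cdot)$ over $H_{m'}$, after which \emph{Expansion of presented proofs} transports the common membership one step up to $m'$.
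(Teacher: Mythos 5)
Your proof is correct. Note that this paper states Lemma~\ref{technical-new} without proof, deferring to Part~I; your argument --- taking $m'$ itself as the witness that every $g \in H_{m'}$ is undivided from $h'$ at $m$, homogenizing the values $Act(m,\cdot)$ over $H_{m'}$ via the \emph{presenting a new proof makes histories divide} constraint, and then lifting to $m'$ via \emph{expansion of presented proofs} --- is the natural derivation and is exactly the kind of routine verification the authors intend.
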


We now establish a connection between the above-defined classes of
stit and jstit frames:

\begin{lemma}\label{stit-jstit}
Let $Ag$ be a community of agents, let $F = \langle Tree, \unlhd,
Choice, Act, R,R_e\rangle$ be a jstit frame for $Ag$, and let $C$
be its reduct to stit frame. Then all of the following statements
are true:
\begin{enumerate}
\item If $C \in \mathcal{C}^{Ag}_{mixsucc}$, then $F \in
\mathcal{F}^{Ag}_{reg}$;

\item It is possible that $F \in \mathcal{F}^{Ag}_{reg}$ but $C
\notin \mathcal{C}^{Ag}_{mixsucc}$.
\end{enumerate}
\end{lemma}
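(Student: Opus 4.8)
The plan is to handle the two claims separately, exploiting the fact that the consequents of conditions \eqref{mixsucc} and \eqref{reg} are literally the same formula, namely $(\exists m_2 \unlhd m_1)(Next(m, m_2))$; thus the whole content of the lemma lies in comparing their antecedents.

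For claim 1, I would fix $m, m_1 \in Tree$, assume the antecedent of \eqref{reg}, and aim at its consequent. Of that (rather heavy) antecedent I expect to need only two pieces: that $m \lhd m_1$, and that there is a history $h' \in H_m$ with $h' \not\approx_m g$ for every $g \in H_{m_1}$. Since $C$ is mixed successor, \eqref{mixsucc} holds for this pair, so it suffices to exclude its second disjunct, i.e.\ that all histories through $m$ are undivided at $m$. I would exclude it by choosing any $g \in H_{m_1}$ --- this set is nonempty because every moment lies on some maximal chain --- and observing that $H_{m_1} \subseteq H_m$ by Lemma \ref{technical}.2, as $m \unlhd m_1$. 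Then $g, h' \in H_m$, so the second disjunct would force $h' \approx_m g$, contradicting the choice of $h'$. Hence the first disjunct holds, and since $m \lhd m_1$ this delivers exactly the consequent of \eqref{reg}.

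For claim 2, I would exhibit a single counterexample frame. Its stit reduct must violate \eqref{mixsucc}, which requires a moment $m$ that is at once a genuine branching point (two histories divided at $m$) and yet has no immediate successor below some $m_1 \rhd m$ --- branching together with density above $m$. A concrete candidate is $Tree = \{r\} \cup (\{1,2\} \times (\mathbb{Q} \cap (0,1]))$, with $r$ below everything, each branch $\{i\} \times (\mathbb{Q} \cap (0,1])$ ordered by the rationals, and the two branches mutually incomparable; take $m = r$ and $m_1 = (1,1)$. The only histories through $r$ run up the two branches and are divided at $r$, while neither branch has a least element, so $r$ has no immediate successor and \eqref{mixsucc} fails at $(r, m_1)$. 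I would turn this into a jstit frame using the trivial choice partition $Choice^m_j = \{H_m\}$, $R = \unlhd$, and $R_e = Tree \times Tree$, and confirm it extends to a model by putting $Act \equiv \emptyset$ and $\mathcal{E} \equiv \emptyset$, under which every constraint mentioning $Act$ or $\mathcal{E}$ holds vacuously and the remaining stit constraints are immediate.

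The real work --- and the step I expect to be the main obstacle --- is checking that this frame is nevertheless regular. Here I would use that $R_e$ is universal: any $S \in \Theta_{m_0}$ contains $m_0$ (Definition \ref{sigma}.1) and is $R_e$-closed (Definition \ref{sigma}.2), hence $S = Tree$; but $Tree$ contains the root $r$, which has no strict predecessor, so Definition \ref{sigma}.4 (equivalently Lemma \ref{theta}) fails, giving $\Theta_{m_0} = \emptyset$ for every $m_0$. Consequently, for any pair with $m \lhd m_1$ the index set $\{m_0 \mid m \lhd m_0 \unlhd m_1\}$ is nonempty (it contains $m_1$), so $\bigcap_{m \lhd m_0 \unlhd m_1}\Theta_{m_0} = \emptyset$ and no witnessing $S$ exists; the antecedent of \eqref{reg} is therefore unsatisfiable and \eqref{reg} holds vacuously. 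This establishes $F \in \mathcal{F}^{Ag}_{reg}$ while $C \notin \mathcal{C}^{Ag}_{mixsucc}$. I would take particular care over this $\Theta$-computation --- that $Tree$ is the unique candidate for $S$, and that the relevant index set is genuinely nonempty --- since that is exactly where such an argument can quietly go wrong.
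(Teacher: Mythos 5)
Your proposal is correct and follows essentially the same route as the paper: Part 1 is the identical argument (pull $m \lhd m_1$ and the divided history $h'$ out of the antecedent of \eqref{reg}, use Lemma \ref{technical}.2 to refute the second disjunct of \eqref{mixsucc}, and read off the shared consequent), and Part 2 rests on the same key observation that a universal $R_e$ trivializes every $\Theta_{m_0}$ so that \eqref{reg} holds vacuously. The only difference is that you construct an explicit rooted, densely ordered counterexample frame (so that $\Theta_{m_0}=\emptyset$ by Lemma \ref{theta}), whereas the paper merely posits an arbitrary frame outside $\mathcal{C}^{Ag}_{mixsucc}$ with $R=R_e=Tree\times Tree$ and also covers the rootless case, where $\Theta_{m_0}=\{Tree\}$ and the antecedent fails instead because $m\in Tree$.
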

\begin{proof}
(Part 1) Assume that $C \in \mathcal{C}^{Ag}_{mixsucc}$; we show
that $F \in \mathcal{F}^{Ag}_{reg}$. Indeed, assume that $m, m_1
\in Tree$, $h' \in H_m$, and $S \subseteq Tree$ verify the
antecedent of \eqref{reg}. This implies, among other things that:
\begin{equation}\label{E:y1}
    m \lhd m_1 \& (\forall g \in H_{m_1})(h' \not\approx_m g).
\end{equation}
Now, choose any $h \in H_{m_1}$. By Lemma \ref{technical}.2, we
get that $h \in H_m$, and, by the second conjunct of \eqref{E:y1},
we obtain that $h \not\approx_m h'$ thus falsifying the second
disjunct in the condition \eqref{mixsucc} for $m, m_1$. Therefore,
the first disjunct of the same condition must hold, whereby, given
that $m \lhd m_1$, we get that $(\exists m_2 \unlhd
m_1)(Next(m,m_2))$, as desired.

As for Part 2, consider a jstit frame $F = \langle Tree, \unlhd,
Choice, Act, R,R_e\rangle$ such that $C$ is outside
$\mathcal{C}^{Ag}_{mixsucc}$ and $R = R_e = Tree \times Tree$.
With these settings we will have:
\begin{align*}
\Theta_m =\left\{%
\begin{array}{ll}
    \emptyset, & \hbox{if there is a $\unlhd$-least element in $Tree$;} \\
    \{ Tree \}, & \hbox{otherwise.} \\
\end{array}%
\right.
\end{align*}
for all $m \in Tree$. By Definition \ref{sigma}.2, therefore, the
second conjunct in the antecedent of \eqref{reg} will be trivially
falsified. This means that $F \in \mathcal{F}^{Ag}_{reg}$, as
desired.
\end{proof}

Next we recall the definition of the Hilbert-style axiomatic
system $\Sigma_D$ from Part I. We first fix an arbitrary agent
community $Ag$ (and will keep it fixed till  Section
\ref{conclusion}). The set of axiom schemes for $\Sigma_D$ then
looks as follows:

\begin{align}
&\textup{A full set of axioms for classical propositional
logic}\label{A0}\tag{\text{A0}}\\
&\textup{$S5$ axioms for $\Box$ and $[j]$ for every $j \in
Ag$}\label{A1}\tag{\text{A1}}\\
&\Box A \to [j]A \textup{ for every }j \in Ag\label{A2}\tag{\text{A2}}\\
&(\Diamond[j_1]A_1 \wedge\ldots \wedge \Diamond[j_n]A_n) \to
\Diamond([j_1]A_1 \wedge\ldots \wedge[j_n]A_n)\label{A3}\tag{\text{A3}}\\
&(s\co(A \to B) \to (t\co A \to (s\times t)\co
B)\label{A4}\tag{\text{A4}}\\
&t\co A \to (!t\co(t\co A) \wedge KA)\label{A5}\tag{\text{A5}}\\
&(s\co A \vee t\co A) \to (s+t)\co A\label{A6}\tag{\text{A6}}\\
&\textup{$S4$ axioms for $K$}\label{A7}\tag{\text{A7}}\\
&KA \to \Box K\Box A\label{A8}\tag{\text{A8}}\\
&\Box Et \to K\Box Et\label{A9}\tag{\text{A9}}
\end{align}

The assumption is that in \eqref{A3} $j_1,\ldots, j_n$ are
pairwise different.

The rules of inferences are then as follows:

\begin{align}
&\textup{From }A, A \to B \textup{ infer } B;\label{R1}\tag{\text{R1}}\\
&\textup{From }A\textup{ infer }KA;\label{R2}\tag{\text{R2}}\\
&\textup{From }KA \to (\neg\Box Et_1 \vee\ldots\vee\neg\Box Et_n\vee\Box Es_1 \vee\ldots\vee\Box Es_k) \notag\\
&\qquad\qquad\textup{ infer }\Rightarrow KA \to (\neg Et_1
\vee\ldots\vee\neg Et_n\vee Es_1 \vee\ldots\vee
Es_k).\label{R_D}\tag{\text{$R_D$}}
\end{align}

$\Sigma_D$ is a minimal system in which we make no assumptions as
to the properties of proof constants. One standard way to extend
this minimal system (following a pattern established in the pure
justification logic) is to add a number of assumptions about proof
constants. More precisely, let us call a \emph{constant
specification} any set $\mathcal{CS}$ such that:
\begin{itemize}
\item $\mathcal{CS} \subseteq \{ c_n\co\ldots c_1\co A\mid
c_1,\ldots,c_n \in PConst, A\textup{ an instance of
}\eqref{A0}-\eqref{A9}\}$;

\item Whenever $c_{n+1}\co c_n\co\ldots c_1\co A \in
\mathcal{CS}$, then also $c_n\co\ldots c_1\co A \in \mathcal{CS}$.
\end{itemize}
For a given constant specification, we can define the
corresponding inference rule $R_{\mathcal{CS}}$ as follows:
\begin{align}
\textup{If }c_n\co\ldots c_1\co A \in \mathcal{CS},\textup{ infer
} c_n\co\ldots c_1\co A.\label{RCS}\tag{\text{$R_{\mathcal{CS}}$}}
\end{align}

We now define that $\Sigma_D(\mathcal{CS})$ is just $\Sigma_D$
extended with the rule \eqref{RCS}. Since $\emptyset$ is clearly
one example of constant specification, we have that
$\Sigma_D(\emptyset) = \Sigma_D$ so that our initial axiomatic
system is also in the class of systems of the form
$\Sigma_D(\mathcal{CS})$. However, when $\mathcal{CS} \neq
\emptyset$, the corresponding system $\Sigma_D(\mathcal{CS})$ will
prove some formulas which are not valid even if we restrict our
attention to jstit models based on any class of frames defined in
Section \ref{basic}. We therefore have to describe the restriction
on jstit models which comes with a commitment to a given
$\mathcal{CS}$. We say that a jstit model $\mathcal{M}$ is
$\mathcal{CS}$\emph{-normal} iff it is true that:
\begin{align*}
(\forall c \in PConst)(\forall m \in Tree)(\{ A \mid c\co A\in
\mathcal{CS} \} \subseteq \mathcal{E}(m,c)),
\end{align*}
where $\mathcal{E}$ is the $\mathcal{M}$'s admissible evidence
function. Again, it is easy to see that the class of
$\emptyset$-normal jstit models is just the whole class of jstit
models so that the representation $\Sigma_D(\emptyset) = \Sigma_D$
does not place any additional restrictions on the class of
intended models of $\Sigma_D$. Whenever $\mathcal{F}$ is a class
of frames, jstit or stit, we will denote the class of
$\mathcal{CS}$-normal jstit models based on the frames from
$\mathcal{F}$ by $Mod_{\mathcal{CS}}(\mathcal{F})$.

We then define a \emph{proof} in $\Sigma_D(\mathcal{CS})$ as a
finite sequence of formulas such that every formula in it is
either an axiom or is obtained from earlier elements of the
sequence by one of inference rules. A proof is a proof of its last
formula. If an $A \in Form^{Ag}$ is provable in
$\Sigma_D(\mathcal{CS})$, we will write $\vdash_{\mathcal{CS}} A$.
We say that $\Gamma \subseteq Form^{Ag}$ is \emph{inconsistent in
$\Sigma_D(\mathcal{CS})$} (or $\mathcal{CS}$-inconsistent) iff for
some $A_1,\ldots,A_n \in \Gamma$ we have $\vdash_{\mathcal{CS}}
(A_1 \wedge\ldots \wedge A_n) \to \bot$, and we say that $\Gamma$
is consistent in $\Sigma_D(\mathcal{CS})$ (or
$\mathcal{CS}$-consistent) iff it is not inconsistent in
$\Sigma_D(\mathcal{CS})$.

Finally, we cite (in a somewhat weakened form) the main result of
Part I, which we will use in this part without a proof:
\begin{theorem}\label{completeness}
Let $\Gamma \subseteq Form^{Ag}$. Then $\Gamma$ is
$\mathcal{CS}$-consistent iff it is satisfiable in
$Mod_{\mathcal{CS}}(\mathcal{C}^{Ag}_{mixsucc})$ iff it is
satisfiable in
$Mod^\downarrow_{\mathcal{CS}}(\mathcal{C}^{Ag}_{mixsucc})$.
\end{theorem}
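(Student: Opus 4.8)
The plan is to prove the three-way equivalence as a cycle. Writing (1) for $\mathcal{CS}$-consistency, (2) for satisfiability in $Mod_{\mathcal{CS}}(\mathcal{C}^{Ag}_{mixsucc})$, and (3) for satisfiability in $Mod^\downarrow_{\mathcal{CS}}(\mathcal{C}^{Ag}_{mixsucc})$, I would establish (1)$\Rightarrow$(3) by a canonical-model construction that delivers a \emph{unirelational} model based on a mixed successor stit frame, (3)$\Rightarrow$(2) by the trivial inclusion $Mod^\downarrow_{\mathcal{CS}}(\mathcal{C}^{Ag}_{mixsucc}) \subseteq Mod_{\mathcal{CS}}(\mathcal{C}^{Ag}_{mixsucc})$, and (2)$\Rightarrow$(1) by soundness. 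This closes the loop and yields all three equivalences at once. Since the construction extends the whole of $\Gamma$ to a point of the model, strong completeness (for arbitrary, possibly infinite $\Gamma$) comes for free, with no separate compactness argument needed.

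For soundness I would verify that each axiom \eqref{A0}--\eqref{A9} is valid on every model in $Mod_{\mathcal{CS}}(\mathcal{C}^{Ag}_{mixsucc})$ and that each rule preserves validity; $\mathcal{CS}$-normality takes care of \eqref{RCS}, and most of the modal axioms are routine checks against the satisfaction clauses and the model constraints. The genuinely delicate rule is the discrete-time rule \eqref{R_D}, and its soundness is exactly where the mixed successor condition \eqref{mixsucc} is used. Passing from $\Box Et$-premises to $Et$-conclusions requires that, at a moment where every $Et_j$ holds dynamically but some $Es_i$ fails, one can either convert the dynamic presentation into a static one along an immediate $Next$-successor — which is where I would invoke Lemma \ref{technical-new} together with \eqref{mixsucc} to supply such a successor and the constraint $\unlhd \subseteq R$ (``future always matters'') to carry $KA$ forward — or else appeal to the undivided-histories disjunct of \eqref{mixsucc}, in which case ``presenting a new proof makes histories divide'' forces $Et$ and $\Box Et$ to coincide at the moment itself. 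The two disjuncts of \eqref{mixsucc} thus match the two cases of the soundness argument for \eqref{R_D}.

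The substance lies in (1)$\Rightarrow$(3). I would first extend the given consistent $\Gamma$ to a maximal $\mathcal{CS}$-consistent set by a Lindenbaum argument; here the inference rules, including \eqref{R2} and \eqref{R_D}, are already encoded in the notion of $\mathcal{CS}$-consistency, so the extension is standard. I would then build a canonical structure whose points are maximal consistent sets, defining the relations in the usual modal manner: $R$ and $R_e$ from the $K$- and $t\co$-modalities, the $\Box$-equivalence grouping points into moments with histories as maximal chains, and the $[j]$-modalities inducing the choice cells. The admissible evidence function $\mathcal{E}$ and the presentation function $Act$ are read off from the $t\co A$ and $Et$ formulas in each point. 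A Truth Lemma, proved by induction on formula complexity, then identifies membership in a point with satisfaction at that point, the modal and justification cases resting on \eqref{A4}--\eqref{A9} and the evidence-closure constraints.

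The main obstacle, as in every stit completeness proof, is to force the canonical structure to be a genuine stit frame obeying no backward branching and independence of agents, and then additionally to be a \emph{mixed successor} frame and \emph{unirelational}. Independence of agents is secured by refining the raw canonical model through a step-wise construction of moments and histories, using axiom \eqref{A3} to guarantee that independent choices share a common outcome; no backward branching is built into the same tree construction. The mixed successor property \eqref{mixsucc} must be engineered into the temporal skeleton, and this is precisely the point where rule \eqref{R_D} does its work, ensuring that the $\Box Et$/$Et$ distinction behaves coherently across immediate successors so that a discrete-time-compatible arrangement of moments exists. Unirelationality ($R_e \subseteq R$, hence $R = R_e$ given the standing inclusion $R \subseteq R_e$) is obtained by the technique known from pure justification logic in \cite{ArtemovN05}, collapsing the two epistemic relations while \eqref{A5}, \eqref{A7} and \eqref{A8} keep the Truth Lemma intact. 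I expect the simultaneous realization of \eqref{mixsucc} and the stit constraints, orchestrated by \eqref{R_D}, to be the hardest single point of the whole argument.
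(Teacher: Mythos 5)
You should first be aware that this paper does not prove Theorem \ref{completeness} at all: it is explicitly cited, ``in a somewhat weakened form,'' as the main result of Part I and is used here without proof. So there is no in-paper argument to measure your proposal against, only the traces of Part I's proof visible in the present text. Those traces are consistent with your plan: the proof of Theorem \ref{j-completeness} reports that Part I constructs a single $\mathcal{CS}$-universal canonical model $\mathcal{M}^{Ag}_{\mathcal{CS}}$ lying in $Mod^\downarrow_{\mathcal{CS}}(\mathcal{C}^{Ag}_{mixsucc})$, which is exactly your (1)$\Rightarrow$(3) step, and closing the cycle via the trivial inclusion (3)$\Rightarrow$(2) and soundness (2)$\Rightarrow$(1) is the standard and correct way to obtain the three-way equivalence. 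Your analysis of why \eqref{R_D} is the one delicate rule, and of how the two disjuncts of \eqref{mixsucc} feed the two cases of its soundness check (undivided histories collapsing $Et$ and $\Box Et$ at the moment itself, versus a $Next$-successor to which the $\Box Et_j$, $KA$ and $\neg\Box Es_i$ are transported via Lemma \ref{technical-new}, the future-always-matters constraint and no-new-proofs-guaranteed), matches the shape of the argument this paper gives for the harder regular-frame case in Theorem \ref{soundness}.

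That said, what you have written is a roadmap rather than a proof, and the two components that carry all the weight are only named. First, the construction of the canonical $Tree$, $\unlhd$ and $Choice$ from maximal consistent sets must deliver historical connection, no backward branching, independence of agents, all the $Act$-constraints and \eqref{mixsucc} simultaneously; saying that ``the $\Box$-equivalence groups points into moments with histories as maximal chains'' is too coarse, since in stit canonical constructions moments are typically built in stages rather than read off directly, and it is precisely here that \eqref{mixsucc} must be verified rather than asserted to be ``ensured'' by \eqref{R_D}. Second, the Truth Lemma cases for $Et$ and $\Box Et$ require the canonical $Act$ to satisfy expansion of presented proofs, no new proofs guaranteed, and the divergence constraint while agreeing with the $Et$-formulas of each maximal consistent set; this is where \eqref{R_D} actually does its work and where the substance of Part I's proof lies. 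Until those steps are carried out, the proposal can only be said to point in the same direction as Part I, not to establish the theorem.
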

We can immediately state a similar completeness result for the
temporal frames:
\begin{corollary}\label{c-completeness}
Let $\Gamma \subseteq Form^{Ag}$. Then $\Gamma$ is
$\mathcal{CS}$-consistent iff it is satisfiable in
$Mod_{\mathcal{CS}}(\mathcal{T}^{Ag}_{mixsucc})$ iff it is
satisfiable in
$Mod^\downarrow_{\mathcal{CS}}(\mathcal{T}^{Ag}_{mixsucc})$.
\end{corollary}
\begin{proof}
Note that $\mathcal{M}$ is a (unirelational) jstit model based on
a frame from $\mathcal{C}^{Ag}_{mixsucc}$ iff $\mathcal{M}$ is a
(unirelational) jstit model based on a frame from
$\mathcal{T}^{Ag}_{mixsucc}$.
\end{proof}

\section{Frame definability results}\label{main}

\subsection{Temporal and stit frames}
We deal with the stit frames first. The principal lemma looks as
follows:
\begin{lemma}\label{stit-falsify}
Let $\mathcal{CS}$ be a constant specification and let $C  =
\langle Tree, \unlhd, Choice\rangle$ be a stit frame outside
$\mathcal{C}^{Ag}_{mixsucc}$. Then there is a
$\mathcal{CS}$-normal jstit model $\mathcal{M}  = \langle Tree,
\unlhd, Choice, Act, R, R_e, \mathcal{E}, V\rangle$ based on $C$
such that for some $(m,h) \in MH(\mathcal{M})$ it is true that:
$$
\mathcal{M}, m, h \not\models K(\Box Ex \vee \neg\Box Ey) \to (Ex
\vee \neg Ey).
$$
\end{lemma}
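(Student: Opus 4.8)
The plan is to build the model directly, exploiting the failure of \eqref{mixsucc} to insert the proof $x$ exactly where it is forced without running afoul of "No new proofs guaranteed". Since $C \notin \mathcal{C}^{Ag}_{mixsucc}$, I would fix $m, m_1 \in Tree$ witnessing the failure of \eqref{mixsucc}; thus $m \lhd m_1$, there is no $m_2 \unlhd m_1$ with $Next(m, m_2)$, and there are $h, g \in H_m$ with $h \not\approx_m g$. Pick any $h_1 \in H_{m_1}$ (so $h_1 \in H_m$ by Lemma \ref{technical}.2); the point $(m, h_1)$ will falsify the formula. As the formula mentions no propositional variables and no modality $t\co A$ sensitive to $\mathcal E$ beyond its accessibility relation, I would set $R = R_e = {\unlhd}$ (a pre-order containing $\unlhd$ with $R \subseteq R_e$, in fact unirelational), let $V$ be empty, and let $\mathcal E$ be any $\mathcal{CS}$-normal admissible evidence function satisfying the evidence closure clauses and constant in its moment argument, so that Monotonicity of evidence holds trivially. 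None of these choices interacts with $Ex$ or $Ey$.

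The geometric heart of the argument is a consequence of the failure of the first disjunct of \eqref{mixsucc}: no immediate successor of $m$ lies on any history undivided from $h_1$ at $m$. Concretely, set $Y = \{n \in Tree : m \lhd n \text{ and } (\exists k \in H_n)(k \approx_m h_1)\}$ and let $Y^{*}$ be its $\unlhd$-upward closure. I would first show that if $Next(m, n)$ then $n \notin Y$: if some $k \approx_m h_1$ ran through such an $n$, then $k$ and $h_1$ would share a moment $m'' \rhd m$, and using No backward branching together with $Next(m,n)$ one forces $n$ itself onto $h_1$; but then $n$ would be an immediate successor of $m$ lying on $h_1$, and comparing $n$ with $m_1$ contradicts the absence of any $m_2 \unlhd m_1$ with $Next(m, m_2)$. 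Since $Y^{*}$ is upward closed and each of its members dominates an element of $Y$ strictly above $m$, the same fact shows $Y^{*}$ contains no immediate successor of $m$; hence every $n \in Y^{*}$ admits some $n''$ with $m \lhd n'' \lhd n$, and such an $n''$ again lies in $Y^{*}$.

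With this in hand I would define $Act$ by: $x, y \in Act(n, \cdot)$ for every history at every $n \in Y^{*}$; at $n = m$ put $y \in Act(m, k) \Leftrightarrow k \approx_m h_1$ and $x \notin Act(m, k)$; and $Act(n, \cdot) = \emptyset$ at all remaining $n$. Because $Act(m, \cdot)$ depends only on the $\approx_m$-class, "Presenting a new proof makes histories divide" holds; and since one of the divided histories $h, g$ is $\not\approx_m h_1$, that history carries no $y$, so $Act_m = \emptyset$ and $\neg\Box Ey$ holds at $m$. The hard part will be verifying "Expansion of presented proofs" and, above all, "No new proofs guaranteed": for $n \in Y^{*}$ one must exhibit $x$ and $y$ already in circulation strictly below $n$, and this is exactly where the room supplied by the geometric fact is indispensable --- an intermediate $n'' \in Y^{*}$ supplies $x$, while the witness $k \approx_m h_1$ at $m$ supplies $y$ (in line with Lemma \ref{technical-new}). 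Expansion reduces to the upward closure of $Y^{*}$ together with the observation that any history carrying $y$ at $m$ and reaching a strict future of $m$ must land in $Y^{*}$.

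Finally I would read off the falsification. At $(m, h_1)$ we have $y \in Act(m, h_1)$ and $x \notin Act(m, h_1)$, so $Ex \vee \neg Ey$ fails. For $K(\Box Ex \vee \neg\Box Ey)$, note that $R$-accessibility from $m$ is just $\{n : m \unlhd n\}$ and that $\Box Ex, \Box Ey$ are moment-determinate: if $n \in Y^{*}$ then $x \in Act_n$, so $\Box Ex$ holds there; otherwise $Act_n = \emptyset$, so $\Box Ey$ fails there. Either way the disjunction holds at every $R$-successor of $m$, giving $\mathcal M, m, h_1 \models K(\Box Ex \vee \neg\Box Ey)$ while the consequent fails, as required.
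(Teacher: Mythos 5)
Your construction is essentially the paper's own: the same witnesses to the failure of \eqref{mixsucc}, the same choices $R = R_e = {\unlhd}$, empty $V$, total $\mathcal{E}$, and an $Act$ that coincides with the paper's (your $Y^{*}$ is exactly the set of moments strictly above $m$ carrying a history undivided from $h_1$ at $m$, where the paper assigns $\{x,y\}$, with the same $\{y\}$-assignment at $m$ itself), falsified at the same moment-history pair. The only difference is organizational: you isolate the geometric fact that $Y$ contains no immediate $\lhd$-successor of $m$ and use it uniformly to verify the ``no new proofs guaranteed'' constraint, whereas the paper reaches the same conclusion by a two-case comparison against $m_1$; both arguments are correct.
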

\begin{proof}
Assume that $C \notin \mathcal{C}^{Ag}_{mixsucc}$. Then we can
choose $m_0,m_1 \in Tree$ and $h_0, h_1 \in H_{m_0}$ such that:

\begin{equation}\label{E:u1}
(h_0 \not\approx_{m_0} h_1) \& (m_0 \lhd m_1) \& (\forall m \unlhd
m_1)(\neg Next(m_0, m)).
\end{equation}

We now extend $C$ to $\mathcal{M}$ setting $R = R_e = \unlhd$,
$\mathcal{E}(m,t) = Form^{Ag}$ for all $m \in Tree$ and $t \in
Pol$, and setting $V(p) = \emptyset$ for all $p \in Var$. As for
$Act$, we set as follows. We first choose an arbitrary $h_2 \in
H_{m_1}$. By Lemma \ref{technical}.2 we know that also $h_2 \in
H_{m_0}$. Now for an arbitrary $m \in Tree$ we define that:
\begin{align*}
    Act(m,h) = \left\{%
\begin{array}{ll}
    \{ y \}, & \hbox{if $m = m_0$ and $h \approx_{m_0} h_2$;} \\
    \{ x,y \}, & \hbox{if $m \rhd m_0$ and $h \approx_{m_0} h_2$;} \\
    \emptyset, & \hbox{otherwise.} \\
\end{array}%
\right.
\end{align*}
It is obvious that every semantical constraint on jstit models is
satisfied, except possibly for the constraints invoking $Act$, and
it is also clear that such an $\mathcal{M}$ satisfies
$\mathcal{CS}$-normality condition for every constant possible
specification $\mathcal{CS}$.

As for $Act$ itself, we start by establishing the following:

\emph{Claim}. Under the current settings for $\mathcal{M}$ we
have, for an arbitrary $m \in Tree$:
\begin{align*}
    Act_m = \left\{%
\begin{array}{ll}
    \{ x,y \}, & (\exists h \in H_m)(m \rhd m_0 \& h \approx_{m_0} h_2); \\
    \emptyset, & \hbox{otherwise.} \\
\end{array}%
\right.
\end{align*}
Indeed, assume that $m \in Tree$ and $h \in H_m$ are such that $m
\rhd m_0$ and $h \approx_{m_0} h_2$. Now, if $g \in H_m$ is
arbitrary, then, by Lemma \ref{technical}.2, $g,h \in H_{m_0}$ so
that $g \approx_{m_0} h$. By Lemma \ref{technical}.3, we get then
$g \approx_{m_0} h_2$ so that $Act(m,g) = \{ x,y \}$. Since $g \in
H_m$ was chosen arbitrarily, this means that also $Act_m = \{ x,y
\}$.

On the other hand, if either $m_0 \ntrianglelefteq m$ or no
history in $H_m$ is undivided from $h_2$ at $m_0$, then we
obviously have $Act_m = \emptyset$. Assume then that $m = m_0$.
Recall that $h_0, h_1 \in H_{m_0} = H_m$ are such that $h_0
\not\approx_{m_0} h_1$. Therefore, by Lemma \ref{technical}.3, we
must have either $h_0 \not\approx_{m_0} h_2$ or $h_1
\not\approx_{m_0} h_2$, whence either $Act(m_0,h_0)$ or
$Act(m_0,h_1)$ equals to $\emptyset$. In any case, we will have
$\emptyset = Act_{m_0} = Act_m$.

We now look into the semantical constraints dependent on $Act$ in
some detail.

\textbf{Expansion of presented proofs}. Assume that $m \lhd m'$
and that $h \in H_{m'}$. Then also $h \in H_{m}$ by Lemma
\ref{technical}.2. Now, if $m_0 \ntrianglelefteq m$, then $Act(m,
h) = \emptyset$ and the constraint is verified trivially. The same
argument applies, if $h \not\approx_{m_0} h_2$. Further, if $m =
m_0$ and $h\approx_{m_0} h_2$, then we must have $Act(m, h) = \{y
\}$ and $Act(m', h) = \{x, y\}$, respectively, and the constraint
is satisfied. Finally, if $m \rhd m_0$ and $h\approx_{m_0} h_2$,
then we must have $Act(m, h) = Act(m', h) = \{x, y\}$, and the
constraint is again satisfied.

\textbf{Presenting a new proof makes histories divide}. Assume
that $m \in Tree$ and that $h\approx_{m} g$, so that for some $m'
\rhd m$ it is true that $m' \in h \cap g$. Now, if $m_0
\ntrianglelefteq m$, then $Act(m,h) = Act(m, g) = \emptyset$ and
the constraint is verified. The same argument applies when $h,g
\not\approx_{m_0} h_2$. Finally, if $m_0 \unlhd m$ and $h,g
\approx_{m_0} h_2$, then either $Act(m,h) = Act(m, g) = \{ y \}$
or $Act(m,h) = Act(m, g) = \{ x,y \}$ depending on whether $m =
m_0$ or $m \rhd m_0$.

\textbf{No new proofs guaranteed}. Assume that $m \in Tree$. If
$Act_m = \emptyset$, then the constraint is trivially satisfied.
On the other hand, if $Act_m \neq \emptyset$, then, by the Claim
above, we must have $Act_m = \{ x,y \}$ and also that $m \rhd m_0
\& h \approx_{m_0} h_2$ for some $h \in H_m$. Therefore, we can
choose an $m' \rhd m_0$ such that $m' \in h_2 \cap h$. But then
$m'$ must be $\unlhd$-comparable with $m$ and we need to deal with
the two cases:

\emph{Case 1}. $m \unlhd m'$. Then, by Lemma \ref{technical}.2,
$h_2 \in H_m$. Recall that, by its choice, $h_2 \in H_{m_1}$, so
that $m$ must be $\unlhd$-comparable with $m_1$ as well. By the
Claim above, we clearly have $Act(m_1, h_2) = \{ x,y \}$,
therefore, if $m_1 \lhd m$, then we are done. On the other hand,
if $m \unlhd m_1$, then, by \eqref{E:u1}, $\neg Next(m_0, m)$. The
latter means that we can choose an $m'' \lhd m$ such that $m''
\ntrianglelefteq m_0$. Hence by the absence of backward branching,
we will have $m_0 \lhd m''$. Note that by Lemma \ref{technical}.2
and  $m'' \lhd m \unlhd m_1$ we will also have $h_2 \in H_{m''}$
whence, by $m_0 \lhd m''$, we get that $Act(m'', h_2) = \{ x,y \}$
again satisfying the constraint.

\emph{Case 2}. $m' \lhd m$. Then, since $m' \rhd m_0$ and $h
\approx_{m_0} h_2$, we must have $Act(m',h) = \{ x,y \}$ thus
satisfying the constraint.

\textbf{Presented proofs are epistemically transparent}. Assume
that $m,m' \in Tree$ are such that $R_e(m,m')$. Then, by
definition of $R_e$ above, we will also have $m \unlhd m'$. Now,
if $Act_m$ is empty, then the constraint is trivially verified.
Otherwise we will have $Act_m = \{ x,y \}$ by the Claim above. Let
$h \in H_{m'}$ be arbitrary. By Lemma \ref{technical}.2, $h \in
H_m$, therefore $Act(m,h)  = \{ x,y \}$. But then, by the
expansion of presented proofs constraint (verified above) we must
have $Act(m',h) = \{ x,y \}$. Since $h \in H_{m'}$ was chosen
arbitrarily, this shows that $Act_{m'} = \{ x,y \}$ and the
constraint is satisfied.

Therefore, the above-defined $\mathcal{M}$ is shown to be a jstit
model for $Ag$ and by the Claim above we obviously have that:
$$
\mathcal{M}, m_0, h_2 \not\models K(\Box Ex \vee \neg\Box Ey) \to
(Ex \vee \neg Ey).
$$
Indeed, whenever $m' \in Tree$, then, by the Claim above, we will
either have $Act_{m'} = \emptyset$ (and then $\mathcal{M}, m', g
\models \neg\Box Ey$ for all $g \in H_{m'}$), or  $Act_{m'} = \{
x,y \}$ (and then $\mathcal{M}, m', g \models \Box Ex$ for all $g
\in H_{m'}$). Therefore, it is clear that we have:
$$
\mathcal{M}, m_0, h_2 \models K(\Box Ex \vee \neg\Box Ey),
$$
and yet, on the other hand it is true that:
$$
\mathcal{M}, m_0, h_2 \models \neg Ex \wedge Ey.
$$
\end{proof}
The frame definability result for stit frames is now
straightforward:
\begin{theorem}\label{frame1}
Let $C  = \langle Tree, \unlhd, Choice\rangle$ be a stit frame for
$Ag$. For any constant specification $\mathcal{CS}$ it is true
that:
$$
(\forall \mathcal{M} \in Mod_{\mathcal{CS}}(\{ C \}))(\mathcal{M}
\models \{ A \in Form^{Ag} \mid \vdash_{\mathcal{CS}} A \})
\Leftrightarrow C \in \mathcal{C}^{Ag}_{mixsucc}.
$$
\end{theorem}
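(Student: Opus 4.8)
The plan is to establish the two directions of the biconditional separately, each leaning on a result already in hand. The right-to-left direction is essentially soundness, while the left-to-right direction reduces, by contraposition, to Lemma \ref{stit-falsify} once we check that the formula falsified there is in fact a theorem of $\Sigma_D(\mathcal{CS})$.

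For the direction from $C \in \mathcal{C}^{Ag}_{mixsucc}$ to the validity of all theorems, I would argue as follows. Fix any $\mathcal{M} \in Mod_{\mathcal{CS}}(\{ C \})$; since $C$ is a mixed successor frame, $\mathcal{M}$ lies in $Mod_{\mathcal{CS}}(\mathcal{C}^{Ag}_{mixsucc})$. Suppose, toward a contradiction, that $\vdash_{\mathcal{CS}} A$ yet $\mathcal{M}, m, h \not\models A$ for some $(m,h) \in MH(\mathcal{M})$. Then the singleton $\{\neg A\}$ is satisfiable in $Mod_{\mathcal{CS}}(\mathcal{C}^{Ag}_{mixsucc})$, so by the right-to-left (soundness) half of Theorem \ref{completeness} it is $\mathcal{CS}$-consistent; but $\vdash_{\mathcal{CS}} A$ forces $\{\neg A\}$ to be inconsistent, a contradiction. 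Hence every theorem holds throughout $\mathcal{M}$, which is what the left-hand side asserts.

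For the converse I would prove the contrapositive: assuming $C \notin \mathcal{C}^{Ag}_{mixsucc}$, Lemma \ref{stit-falsify} supplies a $\mathcal{CS}$-normal model $\mathcal{M} \in Mod_{\mathcal{CS}}(\{ C \})$ together with a pair $(m,h)$ satisfying $\mathcal{M}, m, h \not\models K(\Box Ex \vee \neg\Box Ey) \to (Ex \vee \neg Ey)$. It then remains to verify that this formula is a theorem. The derivation I have in mind first uses the reflexivity ($T$) axiom contained in the $S4$ package \eqref{A7} for $K$ to obtain $K(\Box Ex \vee \neg\Box Ey) \to (\Box Ex \vee \neg\Box Ey)$, which, after a routine propositional rewriting of the disjunction via \eqref{A0} and \eqref{R1}, is exactly the premise $KA \to (\neg\Box Ey \vee \Box Ex)$ of rule \eqref{R_D} with $A = \Box Ex \vee \neg\Box Ey$, a single $t_1 = y$, and a single $s_1 = x$. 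Applying \eqref{R_D} strips the $\Box$'s and yields $KA \to (\neg Ey \vee Ex)$, i.e. the target formula up to commutativity of disjunction. Since this derivation uses only ingredients present in $\Sigma_D$, the formula is provable in $\Sigma_D(\mathcal{CS})$ for every constant specification $\mathcal{CS}$; hence $\mathcal{M}$ falsifies a theorem and the left-hand side of the biconditional fails.

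The main obstacle is the bookkeeping in the last step: one must select the right instance of \eqref{R_D}, matching the single negated and single unnegated disjunct to $t_1 = y$ and $s_1 = x$, and recognize its premise as a plain instance of the $K$-reflexivity axiom modulo propositional reasoning. Everything else is a direct appeal to Lemma \ref{stit-falsify} and to the soundness half already packaged inside Theorem \ref{completeness}.
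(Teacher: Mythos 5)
Your proposal is correct and follows essentially the same route as the paper: the $(\Leftarrow)$ direction is exactly the soundness half of Theorem \ref{completeness} unpacked via the consistency of $\{\neg A\}$, and the $(\Rightarrow)$ direction is, as in the paper, the observation that $K(\Box Ex \vee \neg\Box Ey) \to (Ex \vee \neg Ey)$ is derivable by one application of \eqref{R_D} to an instance of the reflexivity axiom in \eqref{A7}, combined with Lemma \ref{stit-falsify}. Your extra bookkeeping (identifying $A$, $t_1=y$, $s_1=x$ and the propositional rearrangements) just fills in what the paper leaves implicit.
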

\begin{proof}
The $(\Leftarrow)$-part follows from Theorem \ref{completeness}.
For the $(\Rightarrow)$-part, note that for a given $x,y \in PVar$
we have
$$
\vdash_{\mathcal{CS}} K(\Box Ex \vee \neg\Box Ey) \to (Ex
\vee \neg Ey)
$$
by one application of \eqref{R_D} to an appropriate instance of
\eqref{A7}. Given this fact, the $(\Rightarrow)$-part follows from
Lemma \ref{stit-falsify}.
\end{proof}

An analogous result for temporal frames is an easy corollary of
the facts established above. More precisely, we claim the
following:

\begin{corollary}\label{temp-falsify}
Let $\mathcal{CS}$ be a constant specification and let $T  =
\langle Tree, \unlhd\rangle$ be a temporal frame outside
$\mathcal{T}^{Ag}_{mixsucc}$. Then there is a
$\mathcal{CS}$-normal jstit model $\mathcal{M}  = \langle Tree,
\unlhd, Choice, Act, R, R_e, \mathcal{E}, V\rangle$ based on $C$
such that for some $(m,h) \in MH(\mathcal{M})$ it is true that:
$$
\mathcal{M}, m, h \not\models K(\Box Ex \vee \neg\Box Ey) \to (Ex
\vee \neg Ey).
$$
\end{corollary}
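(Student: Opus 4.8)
The plan is to obtain this as an immediate reduction to Lemma \ref{stit-falsify}, exploiting the fact that condition \eqref{mixsucc} is purely temporal: it mentions only $\unlhd$ (through $\lhd$, $Next$, and $\approx_m$) and never refers to the $Choice$ function. Consequently, membership in $\mathcal{T}^{Ag}_{mixsucc}$ and membership in $\mathcal{C}^{Ag}_{mixsucc}$ are governed by one and the same clause, evaluated on one and the same $\langle Tree, \unlhd\rangle$.

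First I would extend the given temporal frame $T = \langle Tree, \unlhd\rangle$ to a stit frame $C = \langle Tree, \unlhd, Choice\rangle$ by equipping it with the trivial choice function, setting $Choice^m_j = \{ H_m \}$ for every $m \in Tree$ and every $j \in Ag$. The constraints 1 and 2 (historical connection, no backward branching) are inherited from $T$, so it remains to check the two constraints bearing on $Choice$. Both checks are routine for the trivial partition: \emph{no choice between undivided histories} holds because $Choice^m_j(h) = H_m = Choice^m_j(h')$ for any $h, h' \in H_m$, and \emph{independence of agents} holds because the only available selection assigns $H_m$ to every agent, and $\bigcap_{j \in Ag} H_m = H_m$ is nonempty since every moment lies on at least one history. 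Hence $C$ is a legitimate stit frame based on $T$.

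Next, since $T \notin \mathcal{T}^{Ag}_{mixsucc}$ and \eqref{mixsucc} does not invoke $Choice$, the very same failure of \eqref{mixsucc} witnessed at the temporal level transfers verbatim to $C$, giving $C \notin \mathcal{C}^{Ag}_{mixsucc}$. At this point Lemma \ref{stit-falsify}, applied to $\mathcal{CS}$ and $C$, furnishes a $\mathcal{CS}$-normal jstit model $\mathcal{M}$ based on $C$ together with a pair $(m,h) \in MH(\mathcal{M})$ at which $K(\Box Ex \vee \neg\Box Ey) \to (Ex \vee \neg Ey)$ fails. Because $C$ is based on $T$, so is $\mathcal{M}$, which is exactly the conclusion sought (the occurrence of ``$C$'' in the statement being understood as this stit frame whose temporal reduct is $T$).

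I do not anticipate a genuine obstacle here; the whole content lies in recognizing that \eqref{mixsucc} is choice-independent, so that the stit-level result of Lemma \ref{stit-falsify} can be imported wholesale after a cost-free choice of $Choice$. The only points demanding a moment's care are confirming that the trivial $Choice$ satisfies the two frame constraints on partitions, and that the ``based on'' relation is preserved when passing from $C$ down to $T$ — both of which are settled by the observations above.
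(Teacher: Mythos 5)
Your proposal is correct and takes essentially the same route as the paper: the paper's proof also equips $T$ with the trivial choice function $Choice^m_j = H_m$ and then invokes the construction of Lemma \ref{stit-falsify}, relying on the fact (already noted in Section \ref{basic}) that condition \eqref{mixsucc} does not mention $Choice$. Your version merely packages this as a black-box application of the lemma to the resulting stit frame rather than as a repetition of its proof, which is an inessential difference.
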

\begin{proof}
Just repeat the proof of Lemma \ref{stit-falsify} adding to the
definition of $\mathcal{M}$ that we set $Choice^m_j = H_m$ for all
$m \in Tree$ and $j \in Ag$.
\end{proof}
Now we can establish the following theorem in the same way as
Theorem \ref{frame1}, using Corollaries \ref{c-completeness} and
\ref{temp-falsify} instead of Theorem \ref{completeness} and Lemma
\ref{stit-falsify}, respectively:
\begin{theorem}\label{frame1-t}
Let $T  = \langle Tree, \unlhd\rangle$ be a temporal frame for
$Ag$. For any constant specification $\mathcal{CS}$ it is true
that:
$$
(\forall \mathcal{M} \in Mod_{\mathcal{CS}}(\{ T \})(\mathcal{M}
\models \{ A \in Form^{Ag} \mid \vdash_{\mathcal{CS}} A \})
\Leftrightarrow T \in \mathcal{T}^{Ag}_{mixsucc}.
$$
\end{theorem}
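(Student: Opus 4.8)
The plan is to run the argument of Theorem \ref{frame1} essentially verbatim, replacing its two ingredients by their temporal counterparts: Theorem \ref{completeness} gives way to Corollary \ref{c-completeness}, and Lemma \ref{stit-falsify} gives way to Corollary \ref{temp-falsify}. For the $(\Leftarrow)$-direction I would extract soundness from Corollary \ref{c-completeness}: its left-to-right reading tells us that any $\mathcal{CS}$-inconsistent set fails to be satisfiable in $Mod_{\mathcal{CS}}(\mathcal{T}^{Ag}_{mixsucc})$, so that, applying this to $\{\neg A\}$, every $\Sigma_D(\mathcal{CS})$-theorem $A$ is valid throughout this class. Since $T \in \mathcal{T}^{Ag}_{mixsucc}$ forces $Mod_{\mathcal{CS}}(\{T\}) \subseteq Mod_{\mathcal{CS}}(\mathcal{T}^{Ag}_{mixsucc})$, every such $\mathcal{M}$ validates all theorems, as required.

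For the $(\Rightarrow)$-direction I would argue by contraposition. The crucial observation, already recorded in the proof of Theorem \ref{frame1}, is that
$$
\vdash_{\mathcal{CS}} K(\Box Ex \vee \neg\Box Ey) \to (Ex \vee \neg Ey),
$$
obtained by one application of \eqref{R_D} to an appropriate instance of \eqref{A7}. Assuming $T \notin \mathcal{T}^{Ag}_{mixsucc}$, Corollary \ref{temp-falsify} supplies a $\mathcal{CS}$-normal model $\mathcal{M}$ based on $T$ together with a pair $(m,h)$ refuting exactly this formula. Hence $\mathcal{M}$ fails to validate a theorem of $\Sigma_D(\mathcal{CS})$, which is the contrapositive of what we want.

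The argument carries essentially no new content; the only point deserving care is that the stit-level construction of Lemma \ref{stit-falsify} genuinely descends to temporal frames, and this is precisely what Corollary \ref{temp-falsify} guarantees, by feeding the trivial choice assignment $Choice^m_j = H_m$ into that construction so that the resulting $\mathcal{M}$ is based on the given $T$ while every $Act$-dependent verification remains untouched. I therefore expect no real obstacle: once the provability of the diagnostic formula and the availability of the temporal countermodel are in hand, the biconditional closes immediately, exactly as in Theorem \ref{frame1}.
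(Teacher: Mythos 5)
Your proposal is correct and follows exactly the route the paper takes: the paper proves Theorem \ref{frame1-t} ``in the same way as Theorem \ref{frame1}, using Corollaries \ref{c-completeness} and \ref{temp-falsify} instead of Theorem \ref{completeness} and Lemma \ref{stit-falsify}.'' Your spelled-out version of both directions, including extracting soundness from the completeness corollary and using the provability of $K(\Box Ex \vee \neg\Box Ey) \to (Ex \vee \neg Ey)$ via \eqref{R_D}, matches the intended argument.
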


\subsection{Justification stit frames}

We now turn to the much more complex case of jstit frames. First,
we need to know how $\Sigma_D(\mathcal{CS})$ stands in relation to
the $\mathcal{CS}$-normal models based on regular jstit frames,
and we start answering this question by establishing a soundness
claim. This claim mostly reduces to a routine check that every
axiom is valid and that rules preserve validity. We treat the less
obvious cases in some detail:

\begin{theorem}\label{soundness}
Let $\mathcal{CS}$ be an arbitrary constant specification. Then
every instance of \eqref{A0}--\eqref{A9} is valid over the class
$Mod_{\mathcal{CS}}(\mathcal{F}^{Ag}_{reg})$, and every
application of rules \eqref{R1},\eqref{R2},\eqref{R_D}, and
\eqref{RCS} to formulas which are valid over
$Mod_{\mathcal{CS}}(\mathcal{F}^{Ag}_{reg})$ yields a formula
which is valid over the same class.
\end{theorem}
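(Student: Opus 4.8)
The plan is to verify validity axiom-by-axiom and rule-by-rule, isolating \eqref{R_D} as the only genuinely hard case. For the axioms, \eqref{A0} and \eqref{A1} are the standard propositional and $S5$ facts, the latter because both $\Box$ (quantifying over all of $H_m$) and each $[j]$ (quantifying over the cell $Choice^m_j(h)$) are $S5$-modalities; \eqref{A2} follows from $Choice^m_j(h) \subseteq H_m$, and \eqref{A3} is exactly the \emph{Independence of agents} constraint. The justification axioms \eqref{A4}--\eqref{A6} each split into an evidence part, discharged by the respective \emph{Evidence closure} clauses 6(a)--6(c), and a truth-at-$R_e$-successors part, discharged by transitivity of $R_e$ and, for the second conjunct of \eqref{A5}, by \emph{Monotonicity of evidence} together with $R \subseteq R_e$. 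Axiom \eqref{A7} is $S4$ because $R$ is a preorder; in \eqref{A8} the values of $KA$ and of $\Box A$ are moment-determinate, so the prefix $\Box K\Box$ collapses to the single $K$-quantifier already present in $KA$; and \eqref{A9} uses \emph{Presented proofs are epistemically transparent} (via $R \subseteq R_e$) to propagate $t \in Act_m$ to all $R$-accessible moments. Among the rules, \eqref{R1} is immediate; \eqref{R2} holds because a valid $A$ holds at every pointed model, in particular at all $R$-accessible ones; and \eqref{RCS} follows by induction on the nesting of constants, each step using $\mathcal{CS}$-normality together with the downward closure of $\mathcal{CS}$ to place the relevant subformula into the appropriate $\mathcal{E}(m,c)$, its truth part being free since the formula one level down is already valid.

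For \eqref{R_D} I would argue by contraposition in a $\mathcal{CS}$-normal model $\mathcal{M}$ based on a regular frame. Note first that $\Box Et$, $\neg\Box Et$ and $KA$ are moment-determinate. Writing $P(m')$ for ``some $t_i \notin Act_{m'}$ or some $s_j \in Act_{m'}$'' and $Q(m')$ for ``$KA$ holds at $m'$'', validity of the premise says exactly $Q(m') \Rightarrow P(m')$ for every $m'$. Suppose the conclusion fails at $(m_0,h_0)$, so $Q(m_0)$ holds while every $t_i \in Act(m_0,h_0)$ and every $s_j \notin Act(m_0,h_0)$. Since $Act_{m_0} \subseteq Act(m_0,h_0)$, the $s_j$-disjunct of $P(m_0)$ is impossible, so the premise forces some fixed $t_{i_0} \notin Act_{m_0}$. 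By \emph{Presenting a new proof makes histories divide} (clause 9, contraposed) there is $g_0 \in H_{m_0}$ with $t_{i_0} \notin Act(m_0,g_0)$ and $h_0 \not\approx_{m_0} g_0$; since $h_0$ and $g_0$ agree below $m_0$ but differ, $m_0$ has a strict $\unlhd$-successor, and we fix $m_1 \rhd m_0$ on $h_0$.

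The plan is now to feed this configuration into \eqref{reg} with $h' := g_0$ and this $m_1$. The divided-histories clause of the antecedent holds since every $g \in H_{m_1}$ shares $m_1 \rhd m_0$ with $h_0$, hence $g \approx_{m_0} h_0 \not\approx_{m_0} g_0$, giving $g_0 \not\approx_{m_0} g$ by Lemma \ref{technical}.3. Granting a suitable $S$ (see below), \eqref{reg} yields $m_2 \unlhd m_1$ with $Next(m_0,m_2)$; as $m_2 \unlhd m_1 \in h_0$ and $h_0$, being a maximal chain, contains every $\unlhd$-predecessor of its elements, $m_2 \in h_0$. Then $m_0 \lhd m_2$ and $h_0 \in H_{m_2}$, so Lemma \ref{technical-new} pushes every $t_i \in Act(m_0,h_0)$ up to $t_i \in Act_{m_2}$. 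Since $m_0 \lhd m_2$ gives $R(m_0,m_2)$ (by $\unlhd \subseteq R$) and $K$ is $S4$, we have $Q(m_2)$, so the premise forces $P(m_2)$; as all $t_i \in Act_{m_2}$, some $s_j \in Act_{m_2}$. Finally \emph{No new proofs guaranteed} at $m_2$ (whose strict predecessors are exactly the moments $\unlhd m_0$, because $Next(m_0,m_2)$) places this $s_j$ in some $Act(m'',h)$ with $m'' \unlhd m_0$ and $h \in H_{m_2} \subseteq H_{m_0}$; since $h \approx_{m_0} h_0$, \emph{Expansion of presented proofs} and clause 9 then move it into $Act(m_0,h_0)$, contradicting $s_j \notin Act(m_0,h_0)$.

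The hard part is supplying the set $S \in \bigcap_{m_0 \lhd m' \unlhd m_1}\Theta_{m'}$ required by \eqref{reg}, with $m_0 \notin S$ and $m' \notin S$ for every immediate successor $m'$ of $m_0$ on $g_0$. Definition \ref{sigma}.1 forces every $m'$ with $m_0 \lhd m' \unlhd m_1$ into $S$, and these are precisely the $h_0$-moments carrying all $t_i$ statically (again by Lemma \ref{technical-new}), so the natural candidate is $S = \{m' \mid (\forall i)\, t_i \in Act_{m'}\}$. This set is $R_e$-upward closed (Definition \ref{sigma}.2) by \emph{Monotonicity of evidence}, excludes $m_0$, and — via \emph{No new proofs guaranteed}, \emph{Expansion} and clause 9 — excludes every immediate $g_0$-successor, since at such a point any witness for a statically present $t_{i_0}$ would have to come from a history undivided from $g_0$ at $m_0$, where $t_{i_0}$ is absent. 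The genuine obstacle is Definitions \ref{sigma}.3 and \ref{sigma}.4: a proof can be statically present at every immediate successor of a moment yet only dynamically present at the moment itself, so the raw candidate need not satisfy the ``fill-in'' condition \ref{sigma}.3. Overcoming this is the crux: one must replace the raw candidate by a refined set that separates the $h_0$-future from $m_0$ and the $g_0$-side while being genuinely closed under \ref{sigma}.3 and \ref{sigma}.4, exploiting that the premise already guarantees $P$ — hence the appearance of some $s_j$ — throughout the relevant $\unlhd$-future. Marrying this separation requirement to the static-versus-dynamic behaviour of $Act$ is where essentially all the work of the theorem is concentrated.
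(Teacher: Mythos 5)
Your treatment of the axioms and of \eqref{R1}, \eqref{R2}, \eqref{RCS} matches the paper's, and your overall strategy for \eqref{R_D} is also the paper's: contrapose, locate $h'$ (your $g_0$) with $t_{i_0}\notin Act(m_0,g_0)$, pick $m_1\rhd m_0$ on $h_0$, feed the configuration into \eqref{reg} with the candidate $S=\{m''\mid t_1,\ldots,t_n\in Act_{m''}\}$, and derive the contradiction at the resulting $m_2$ with $Next(m_0,m_2)$. The final step at $m_2$ and the exclusion of the immediate $g_0$-successors from $S$ are also argued as in the paper.

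However, there is a genuine gap exactly where you locate ``the crux'': you assert that the raw candidate $S$ need not satisfy Definition \ref{sigma}.3, on the grounds that a proof could be statically present at every immediate successor of a moment while only dynamically present at the moment itself, and you conclude that a ``refined set'' must be constructed. This is wrong: the model constraints rule out that scenario, and the paper verifies \ref{sigma}.3 (and \ref{sigma}.4) for the raw candidate directly. Concretely, if every $g\in H_{m_2}$ passes through some $m_g$ with $Next(m_2,m_g)$ and $t_1,\ldots,t_n\in Act_{m_g}$, then the \emph{no new proofs guaranteed} constraint at $m_g$ yields witnesses $t_i\in Act(m^i_g,h^i_g)$ with $m^i_g\lhd m_g$, hence $m^i_g\unlhd m_2$ by $Next(m_2,m_g)$; since $h^i_g\in H_{m_g}$ shares $m_g\rhd m_2$ with $g$, we get $h^i_g\approx_{m_2}g$, so by \emph{presenting a new proof makes histories divide} and \emph{expansion of presented proofs} each $t_i$ lands in $Act(m_2,g)$ for the \emph{arbitrary} $g$, i.e.\ $m_2\in S$. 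A similar witness-plus-density argument (taking the $\unlhd$-greatest witness moment $m'$ and a moment $m''$ with $m'\lhd m''\lhd m_2$, then applying Lemma \ref{technical-new}) verifies \ref{sigma}.4. So no refined set is needed; the verification you declared impossible for the raw candidate is precisely the remaining work of the proof, and it goes through with the same constraints you already invoke elsewhere. A minor further slip: $R_e$-upward closure of $S$ (Definition \ref{sigma}.2) follows from \emph{presented proofs are epistemically transparent}, not from \emph{monotonicity of evidence}, which concerns $\mathcal{E}$ rather than $Act$.
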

\begin{proof}
First, note that if $\mathcal{M} = \langle Tree, \unlhd, Choice,
Act, R, R_e, \mathcal{E}, V\rangle$ is a $\mathcal{CS}$-normal
jstit model based on a jstit frame from $\mathcal{F}^{Ag}_{reg}$,
then $\langle Tree, \unlhd, Choice, V\rangle$ is a model of stit
logic. Therefore, axioms \eqref{A0}--\eqref{A3}, which were
copy-pasted from the standard axiomatization of \emph{dstit}
logic\footnote{See, e.g. \cite[Ch. 17]{belnap2001facing}, although
$\Sigma$ uses a simpler format closer to that given in
\cite[Section 2.3]{balbiani}.} must be valid. Second, note that if
$\mathcal{M} = \langle Tree, \unlhd, Choice, Act, R, R_e,
\mathcal{E}, V\rangle$ is a $\mathcal{CS}$-normal jstit model,
then $\mathcal{M} = \langle Tree, R, R_e, \mathcal{E}, V\rangle$
is what is called in \cite[Section 6]{ArtemovN05} a justification
model with the form of constant specification given by
$\mathcal{CS}$\footnote{The format for the variable assignment $V$
is slightly different, but this is of no consequence for the
present setting.}. This means that also all of the
\eqref{A4}--\eqref{A7} must be valid, whereas \eqref{R1},
\eqref{R2}, and \eqref{RCS} must preserve validity, given that all
these parts of our axiomatic system were borrowed from the
standard axiomatization of justification logic. The validity of
other parts of $\Sigma_D(\mathcal{CS})$ will be motivated below in
some detail. In what follows, $\mathcal{M} = \langle Tree, \unlhd,
Choice, Act, R, R_e,\mathcal{E}, V\rangle$ will always stand for
an arbitrary jstit model in
$Mod_{\mathcal{CS}}(\mathcal{F}^{Ag}_{reg})$, and $(m,h)$ for an
arbitrary element of $MH(\mathcal{M})$.

As for \eqref{A8}, assume for \emph{reductio} that $\mathcal{M},
m,h \models KA \wedge \Diamond K \Diamond\neg A$. Then
$\mathcal{M}, m,h \models KA$ and also $\mathcal{M}, m,h' \models
K \Diamond\neg A$ for some $h' \in H_m$. By reflexivity of $R$, it
follows that $\Diamond\neg A$ will be satisfied at $(m,h)$ in
$\mathcal{M}$. The latter means that, for some $h'' \in H_m$, $A$
must fail at $(m,h'')$ and therefore, again by reflexivity of $R$,
$KA$ must fail at $(m,h)$ in $\mathcal{M}$, a contradiction.

We consider next \eqref{A9}. If $\Box Et$ is true at $(m,h)$ in
$\mathcal{M}$, then, by definition, $t \in Act_m$. Now, if $m' \in
Tree$ is such that $R(m,m')$, then, by $R \subseteq R_e$ we will
have $R_e(m,m')$, and, by the epistemic transparency of presented
proofs constraint, we must have $t \in Act_{m'}$ so that for every
$g \in H_{m'}$ we will have $\mathcal{M},m',g \models \Box Et$.
Therefore, we must have $\mathcal{M},m,h \models K\Box Et$ as
well.

The hardest part is to show that \eqref{R_D} preserves validity
over jstit models from
$Mod_{\mathcal{CS}}(\mathcal{F}^{Ag}_{reg})$. Assume that $KA \to
(\neg\Box Et_1 \vee\ldots\vee\neg\Box Et_n\vee\Box Es_1
\vee\ldots\vee\Box Es_k)$ is valid over this class of jstit
models, and assume also that we have:

\begin{equation}\label{E:e1}
\mathcal{M}, m, h \models KA \wedge Et_1 \wedge\ldots\wedge
Et_n\wedge\neg Es_1 \wedge\ldots\wedge\neg Es_k.
\end{equation}

By validity of \eqref{A1}, it follows that:
$$
\mathcal{M}, m, h \models KA \wedge \neg\Box Es_1
\wedge\ldots\wedge\neg\Box Es_k.
$$
Whence, by the assumed validity we know that also:
$$
\mathcal{M}, m, h \models \neg\Box Et_1 \vee\ldots\vee\neg\Box
Et_n,
$$
therefore, we can choose a natural $u$ such that $1 \leq u \leq n$
and:
$$
\mathcal{M}, m, h \models \neg\Box Et_u.
$$
The latter, in turn, means that for some $h' \in H_m$ we have
that:

\begin{equation}\label{E:e2}
\mathcal{M}, m, h' \models \neg Et_u.
\end{equation}
Comparison between \eqref{E:e1} and \eqref{E:e2} shows that
$Act(m,h) \neq Act(m,h')$, whence by the presenting a new proof
makes histories divide constraint we get that $h \not\approx_m
h'$. Hence we know that $m$ cannot be $\unlhd$-maximal in $Tree$.
Using Lemma \ref{technical}.1, we can choose in $Tree$ some $m_1
\rhd m$ such that $m_1 \in h$. We now establish the following
claims:

\emph{Claim 1}. $(\forall g \in H_{m_1})(h' \not\approx_m g)$.

The argument is the same as for $h$: if $g \in H_{m_1}$, then $m_1
\in g \cap h$ so that, by $m_1 \rhd m$ we must have $g \approx_m
h$. But then, given Lemma \ref{technical}.3 and $h \not\approx_m
h'$, we cannot have $g \approx_m h'$.

\emph{Claim 2}. $S = \{ m'' \in Tree\mid t_1,\ldots, t_n \in
Act_{m''} \} \in \Theta_{m'}$ for every $m'$ such that $m \lhd m'
\unlhd m_1$. Furthermore, $m \notin S$.

The fact that $m \notin S$ immediately follows from \eqref{E:e2}.
Now, choose in $Tree$ an arbitrary $m'$ such that $m \lhd m'
\unlhd m_1$. Since $h \in H_{m_1}$, we know, by Lemma
\ref{technical}.2, that $h \in H_{m'}$. Further, if $g \in H_{m'}$
is arbitrary, then, by the same lemma, $g \in H_m$. Therefore, $g
\approx_m h$ and, by the presenting a new proof makes histories
divide constraint, $Act(m,g) = Act(m,h) \supseteq \{ t_1,\ldots,
t_n \}$. Whence, by $m \lhd m'$ and the expansion of presented
proofs constraints we get that $\{ t_1,\ldots, t_n \} \subseteq
Act(m',g)$. Since $g \in H_{m'}$ was chosen arbitrarily, this
means that $t_1,\ldots, t_n \in Act_{m'}$ and hence $m' \in S$
thus verifying Definition \ref{sigma}.1.

Next, if $m_2 \in S$ and $R_e(m_2,m_3)$, then $t_1,\ldots, t_n \in
Act_{m_2}$, hence by the epistemic transparency of presented
proofs $t_1,\ldots, t_n \in Act_{m_3}$, which means that also $m_3
\in S$ and Definition \ref{sigma}.2 is also verified.

Furthermore, assume that $m_2 \in Tree$ is such that, for all $g
\in H_{m_2}$, there exists $m_g \in g$ with the property
$Next(m_2, m_g) \& m_g \in S$. So choose an arbitrary $g \in
H_{m_2}$. We have then $t_1,\ldots, t_n \in Act_{m_g}$, whence, by
the no new proofs guaranteed constraint we can choose $(m^1_g,
h^1_g),\ldots, (m^n_g, h^n_g)$ such that:
\begin{equation}\label{E:eq-a}
\begin{array}{r@{}l}
h^1_g \in H_{m_g} \& m^1_g \lhd m_g \& t_1 &\in Act(m^1_g, h^1_g);\\
&\ldots\\
h^n_g \in H_{m_g} \& m^n_g \lhd m_g \& t_n &\in Act(m^n_g, h^n_g).
\end{array}
\end{equation}
By $Next(m_2, m_g)$ and \eqref{E:eq-a} we get that:
\begin{equation}\label{E:e3}
    m^1_g, \ldots, m^n_g \unlhd m_2.
\end{equation}
From $g, h^1_g, \ldots, h^n_g \in H_{m_g}$, $m_2 \lhd m_g$, and
Lemma \ref{technical}.2 we get that:
\begin{equation}\label{E:e4}
    h^1_g, \ldots, h^n_g \in H_{m_2}.
\end{equation}
and, further:
\begin{equation}\label{E:e5}
    h^1_g \approx_{m_2} g, \ldots, h^n_g \approx_{m_2} g.
\end{equation}
By the presenting a new proof makes histories divide constraint,
this further means that:
\begin{equation}\label{E:e6}
    Act(m_2, h^1_g) = \ldots = Act(m_2, h^n_g) = Act(m_2, g).
\end{equation}
Next, by \eqref{E:e3}, \eqref{E:e4}, the expansion of presented
proofs constraint, and \eqref{E:eq-a} we get that:
\begin{equation}\label{E:e7}
   t_1 \in Act(m_2, h^1_g), \ldots, t_n \in Act(m_2, h^n_g).
\end{equation}
It follows now from \eqref{E:e6} and \eqref{E:e7} that $t_1,
\ldots, t_n \in Act(m_2, g)$. Since $g \in H_{m_2}$ was chosen
arbitrarily, this further means that $t_1, \ldots, t_n \in
Act_{m_2}$ and thus $m_2 \in S$, as desired. In this way,
Definition \ref{sigma}.3 is verified.

Now, let $m_2 \in S$ and assume that:
\begin{equation}\label{E:e9}
(\forall m_3 \lhd m_2)\exists m_4(m_3 \lhd m_4 \lhd m_2).
\end{equation}
By $m_2 \in S$ we know that $t_1,\ldots, t_n \in Act_{m_2}$.
Again, by the no new proofs guaranteed constraint we can choose
$(m^1, h^1),\ldots, (m^n, h^n)$ such that:
\begin{equation}\label{E:eq-a1}
\begin{array}{r@{}l}
h^1 \in H_{m_2} \& m^1 \lhd m_2 \& t_1 &\in Act(m^1, h^1);\\
&\ldots\\
h^n \in H_{m_2} \& m^n \lhd m_2 \& t_n &\in Act(m^n, h^n).
\end{array}
\end{equation}
By \eqref{E:eq-a1} and the absence of backward branching it
follows that all of $m^1,\ldots, m^n$ are $\unlhd$-comparable, so
we let $m'$ be the $\unlhd$-greatest moment among $m^1,\ldots,
m^n$.  By the choice of $m'$ and \eqref{E:eq-a1}, we have:
\begin{equation}\label{E:e10}
   m' \lhd m_2.
\end{equation}
Therefore, by Lemma \ref{technical}.2, we get $H_{m_2} \subseteq
H_{m'}$, whence:
\begin{equation}\label{E:e11}
    h^1, \ldots, h^n \in H_{m'}.
\end{equation}
It follows then from \eqref{E:eq-a1} and \eqref{E:e10} that:
\begin{equation}\label{E:e12}
    h^1 \approx_{m'} \ldots \approx_{m'} h^n,
\end{equation}
which further means, by the presenting a new proof makes histories
divide constraint that:
\begin{equation}\label{E:e13}
    Act(m', h^1) = \ldots = Act(m', h^n).
\end{equation}
Again by the choice of $m'$ and the expansion of presented proofs
constraint, we further get that:
\begin{equation}\label{E:e9-1}
   t_1 \in Act(m', h^1), \ldots, t_n \in Act(m', h^n).
\end{equation}

It follows then from \eqref{E:e9-1} and \eqref{E:e13} that
$t_1,\ldots, t_n \in Act(m', h^1)$. Now, by \eqref{E:e9} and
\eqref{E:e10}, we can choose an $m'' \in Tree$ such that $m' \lhd
m'' \lhd m_2$. By Lemma \ref{technical}.2, we know that $H_{m_2}
\subseteq H_{m''}$, whence, by \eqref{E:eq-a1}, $h^1 \in H_{m''}$.
It follows, by Lemma \ref{technical-new}, that $t_1,\ldots, t_n
\in Act_{m''}$ and thus $m'' \in S$, as desired. This ends both
the verification of Definition \ref{sigma}.4 and the proof of
Claim 2.

\emph{Claim 3}. $(\forall m'_1 \in h')(Next(m,m'_1) \Rightarrow
m'_1 \notin S)$.

Indeed, assume the contrary, i.e. that for some  $m'_1 \in h'$ we
have both $Next(m,m'_1)$ and $m'_1 \in S$. Then  we will have
$t_1,\ldots, t_n \in Act_{m'_1}$. But then, by the no new proofs
guaranteed constraint, we can choose a $g \in H_{m'_1}$ and $m''
\lhd m'_1$ such that $t_u \in Act(m'',g)$. By $Next(m,m'_1)$ we
know that $m'' \unlhd m$ and by Lemma \ref{technical}.2 and $m'_1
\rhd m$ we know that $g \in H_m$. Therefore, by the expansion of
presented proofs, we get that $t_u \in Act(m,g)$. Moreover, note
that $m'_1 \in h' \cap g$ so that $h' \approx_m g$. Therefore, by
the presenting a new proof makes histories divide constraint, we
must have $Act(m, h') = Act(m,g) \ni t_u$ which is in plain
contradiction with \eqref{E:e2}.

In view of the Claims 1--3 above, we must be able to choose an
$m_2 \in Tree$ such that both $m_2 \unlhd m_1$ and $Next(m, m_2)$.
So we consider such an $m_2$. Given that $m_1 \in h$, we know, by
Lemma \ref{technical}.2, that $h \in H_{m_2}$. Therefore, it
follows from \eqref{E:e1} and Lemma \ref{technical-new} that
$t_1,\ldots, t_n \in Act_{m_2}$, or, equivalently:
\begin{equation}\label{E:e14}
   \mathcal{M}, m_2, h \models \Box Et_1 \wedge\ldots \wedge \Box
   Et_n.
\end{equation}
Furthermore, by the future always matters constraint we know that
$R(m,m_2)$, whence it follows, again by \eqref{E:e1}, that:
\begin{equation}\label{E:e15}
   \mathcal{M}, m_2, h \models KA.
\end{equation}
Finally, choose an arbitrary $r$ between $1$ and $k$. If $s_r \in
Act_{m_2}$, then, by the no new proofs guaranteed constraint,
there must be some $g \in H_{m_2}$ and some $m_0 \lhd m$ such that
$s_r \in Act(m_0, g)$. Then, by Lemma \ref{technical}.2, $g \in
H_m$, hence $h \approx_m g$. Therefore, by the presenting a new
proof makes histories divide constraint, $Act(m, g) = Act(m,h)$.
By $Next(m, m_2)$ we must have $m_0 \unlhd m$, therefore, by the
expansion of presented proofs, $s_r \in Act(m,g)$, whence also
$s_r \in Act(m,h)$. But this plainly contradicts \eqref{E:e1}.
Since $1 \leq r \leq k$ was chosen arbitrarily, this means that
all of $s_1,\ldots, s_k$ are outside $Act_{m_2}$ so that we have:
\begin{equation}\label{E:e16}
   \mathcal{M}, m_2, h \models \neg\Box Es_1 \wedge\ldots \wedge \neg\Box
   Es_k.
\end{equation}
Taken together, \eqref{E:e14}--\eqref{E:e16} contradict the
assumed validity of

\noindent$KA \to (\neg\Box Et_1 \vee\ldots\vee\neg\Box
Et_n\vee\Box Es_1 \vee\ldots\vee\Box Es_k)$.
\end{proof}

It follows from Theorem \ref{soundness} that we cannot have a
result analogous to Theorem \ref{frame1} w.r.t. jstit frames.
Indeed, by Lemma \ref{stit-jstit}.2, there exists a regular jstit
frame $F$ for $Ag$, which violates condition \eqref{mixsucc}.
However, by Theorem \ref{soundness}, every $\mathcal{CS}$-normal
jstit model based on $F$ will make every theorem of
$\Sigma_D(\mathcal{CS})$ valid. Therefore, the frame definability
result for jstit frames has to use a much more involved regularity
condition in place of \eqref{mixsucc}.

Even though Theorem \ref{soundness} is already sufficient to
derive the frame definability theorem for jstit frames, we pause
to observe that one can actually get a completeness theorem as
well:
\begin{theorem}\label{j-completeness}
Let $\Gamma \subseteq Form^{Ag}$ and let $\mathcal{F}$ be a class
of jstit frames such that $\mathcal{F}^{Ag}_{reg}\downarrow
\subseteq \mathcal{F} \subseteq \mathcal{F}^{Ag}_{reg}$. Then
$\Gamma$ is $\mathcal{CS}$-consistent iff it is satisfiable in
$Mod_{\mathcal{CS}}(\mathcal{F})$.
\end{theorem}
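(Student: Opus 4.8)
The plan is to prove both directions of the biconditional separately, as is standard for completeness results. The easy direction is soundness: if $\Gamma$ is $\mathcal{CS}$-consistent, I need to show it is satisfiable in $Mod_{\mathcal{CS}}(\mathcal{F})$. Since $\mathcal{F} \subseteq \mathcal{F}^{Ag}_{reg}$, it suffices to produce a satisfying model based on a frame in $\mathcal{F}^{Ag}_{reg}$, and in fact I want one in the smaller class $\mathcal{F}^{Ag}_{reg}\downarrow$ so that it lies in \emph{every} admissible $\mathcal{F}$. Conversely, the other direction — satisfiability in $Mod_{\mathcal{CS}}(\mathcal{F})$ implies consistency — follows immediately from Theorem \ref{soundness}: every theorem of $\Sigma_D(\mathcal{CS})$ is valid over $Mod_{\mathcal{CS}}(\mathcal{F}^{Ag}_{reg}) \supseteq Mod_{\mathcal{CS}}(\mathcal{F})$, so a model satisfying $\Gamma$ cannot also satisfy the negation of any derivation witnessing inconsistency. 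Thus the real content is the completeness (left-to-right) direction.

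For completeness, the cleanest route is to leverage Theorem \ref{completeness}, which already gives us a $\mathcal{CS}$-normal unirelational jstit model $\mathcal{M}$ satisfying $\Gamma$ and based on a frame whose stit-reduct lies in $\mathcal{C}^{Ag}_{mixsucc}$. The key observation is Lemma \ref{stit-jstit}.1: if the stit-reduct $C$ of the underlying jstit frame $F$ is in $\mathcal{C}^{Ag}_{mixsucc}$, then $F$ itself is in $\mathcal{F}^{Ag}_{reg}$. Since $\mathcal{M}$ is moreover unirelational, its frame $F$ belongs to $\mathcal{F}^{Ag}_{reg}\downarrow$. By the hypothesis $\mathcal{F}^{Ag}_{reg}\downarrow \subseteq \mathcal{F}$, we conclude $F \in \mathcal{F}$, and hence $\mathcal{M} \in Mod_{\mathcal{CS}}(\mathcal{F})$. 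This shows $\Gamma$ is satisfiable in $Mod_{\mathcal{CS}}(\mathcal{F})$ and completes the harder direction essentially for free, by chaining the Part~I completeness theorem with the frame-class inclusion established in Lemma \ref{stit-jstit}.

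The step I expect to carry the most weight is verifying that the model delivered by Theorem \ref{completeness} is genuinely based on a jstit \emph{frame} in the technical sense required — i.e. that its stit-reduct being a mixed successor frame is inherited by the full jstit frame via Lemma \ref{stit-jstit}.1, and that unirelationality of the model transfers to a unirelational frame. Both are essentially bookkeeping, since Theorem \ref{completeness} explicitly gives satisfiability in $Mod^\downarrow_{\mathcal{CS}}(\mathcal{C}^{Ag}_{mixsucc})$, meaning the model is unirelational and its stit-reduct is a mixed successor frame by construction. The only subtlety is making sure the inclusion $\mathcal{F}^{Ag}_{reg}\downarrow \subseteq \mathcal{F}$ is used in precisely the right place: it guarantees that whatever class $\mathcal{F}$ was chosen between the two extremes, it is large enough to contain the particular regular unirelational frame we extracted, while the inclusion $\mathcal{F} \subseteq \mathcal{F}^{Ag}_{reg}$ is what licenses the soundness half via Theorem \ref{soundness}. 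In short, the whole result is a sandwich argument: soundness pins down the upper bound $\mathcal{F}^{Ag}_{reg}$ and completeness pins down the lower bound $\mathcal{F}^{Ag}_{reg}\downarrow$, and every $\mathcal{F}$ trapped between them inherits both properties.
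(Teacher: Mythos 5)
Your proposal is correct and follows essentially the same route as the paper: the satisfiability-to-consistency direction via Theorem \ref{soundness} together with $\mathcal{F} \subseteq \mathcal{F}^{Ag}_{reg}$, and the converse by extracting a unirelational model based on a mixed-successor frame and pushing it into $\mathcal{F}^{Ag}_{reg}\downarrow \subseteq \mathcal{F}$ via Lemma \ref{stit-jstit}.1. The only cosmetic difference is that the paper explicitly names the canonical model $\mathcal{M}^{Ag}_{\mathcal{CS}}$ from Part I as the witness, whereas you invoke the statement of Theorem \ref{completeness}; these amount to the same thing.
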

\begin{proof}
($\Rightarrow$). Let $\Gamma \subseteq Form^{Ag}$ be satisfiable
in $Mod_{\mathcal{CS}}(\mathcal{F})$ so that for some $\mathcal{M}
\in Mod_{\mathcal{CS}}(\mathcal{F})$ and some $(m,h) \in
MH(\mathcal{M})$ we have $\mathcal{M}, m, h \models \Gamma$. Then
we must have $\mathcal{M} \in
Mod_{\mathcal{CS}}(\mathcal{F}^{Ag}_{reg})$. If $\Gamma$ were
$\mathcal{CS}$-inconsistent, this would mean that for some
$A_1,\ldots,A_n \in \Gamma$ we would have $\vdash_{\mathcal{CS}}
(A_1 \wedge\ldots \wedge A_n) \to \bot$. By Theorem
\ref{soundness}, this would mean that:
$$
\mathcal{M}, m, h \models (A_1 \wedge\ldots \wedge A_n) \to \bot,
$$
whence clearly $\mathcal{M}, m, h \models \bot$, which is
impossible. Therefore, $\Gamma$ must be $\mathcal{CS}$-consistent.

($\Leftarrow$). We can re-use the canonical model
$\mathcal{M}^{Ag}_{\mathcal{CS}}$ from Part I of this paper. In
Part I, $\mathcal{M}^{Ag}_{\mathcal{CS}}$ was shown to be
$\mathcal{CS}$-universal in that it satisfies every
$\mathcal{CS}$-consistent subset of $Form^{Ag}$. It was also shown
that $\mathcal{M}^{Ag}_{\mathcal{CS}} \in
Mod^\downarrow_{\mathcal{CS}}(\mathcal{C}^{Ag}_{mixsucc})$,
whence, by Lemma \ref{stit-jstit}.1, we get that
$\mathcal{M}^{Ag}_{\mathcal{CS}}$ is in
$Mod_{\mathcal{CS}}(\mathcal{F}^{Ag}_{reg}\downarrow)$ and
therefore in $\mathcal{F}$.
\end{proof}

Now for the frame definability for jstit frames:
\begin{lemma}\label{jstit-falsify}
$\mathcal{CS}$ be a constant specification and let $F  = \langle
Tree, \unlhd, Choice, R, R_e \rangle$ be a jstit frame outside
$\mathcal{F}^{Ag}_{reg}$. Then there is a $\mathcal{CS}$-normal
jstit model $\mathcal{M}  = \langle Tree, \unlhd, Choice, Act, R,
R_e, \mathcal{E}, V\rangle$ based on $F$ such that for some $(m,h)
\in MH(\mathcal{M})$ it is true that:
$$
\mathcal{M}, m, h \not\models K(\Box Ex \vee \neg\Box Ey) \to (Ex
\vee \neg Ey).
$$
\end{lemma}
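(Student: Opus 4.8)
The plan is to mirror the construction of Lemma \ref{stit-falsify}, but with the set $S$ furnished by the failure of regularity playing the role that the cone of moments $\rhd m_0$ played there, and with the epistemic relations $R,R_e$ now dictated by the given frame $F$ rather than chosen freely.

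First I would unpack the hypothesis $F\notin\mathcal{F}^{Ag}_{reg}$: there are $m,m_1$ for which the antecedent of \eqref{reg} holds while its consequent fails. Concretely this yields $m\lhd m_1$, a set $S\in\bigcap_{m\lhd m_0\unlhd m_1}\Theta_{m_0}$ with $m\notin S$, a history $h'\in H_m$ with $(\forall g\in H_{m_1})(h'\not\approx_m g)$ and $(\forall m^*\in h')(Next(m,m^*)\Rightarrow m^*\notin S)$, together with $(\forall m_2\unlhd m_1)(\neg Next(m,m_2))$. Reading this against Claims 1--3 of Theorem \ref{soundness}, the intended picture is that $x$ and $y$ are the proofs tracked by $S$, that the falsifying pair will be $(m,h)$ for some $h\in H_{m_1}$ (so that $Ey$ holds there), and that $h'$ is the auxiliary $y$-free history that keeps $m$ out of the accomplished region.

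The construction keeps $R,R_e$ as in $F$, sets $\mathcal{E}(m'',t)=Form^{Ag}$ for all $m'',t$ (so $\mathcal{CS}$-normality comes for free) and $V\equiv\emptyset$, and fixes some $h\in H_{m_1}$. I would then define $Act$ so that, exactly as in Lemma \ref{stit-falsify}, $y$ is demonstrated on a history from the moment that history enters $S$ onward and is in addition seeded on the whole $\approx_m$-class of $h$ at $m$, while $x$ trails $y$ by one $S$-step, being demonstrated on a history only once it has already been inside $S$ strictly earlier (just as in the stit proof $x$ appeared only strictly above the boundary $m_0$). Two easy observations then make the target formula fail at $(m,h)$. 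Since $\unlhd\subseteq R\subseteq R_e$, clause 2 of Definition \ref{sigma} makes $S$ upward closed under $\unlhd$; and a short argument using no backward branching shows that any history $g\approx_m h$ already lies in $S$ at each of its moments strictly above $m$ (its reconvergence point with $h$ is a moment of $h$ above $m$, hence in $S$, and $S$ is $\unlhd$-upward closed). Consequently the seed never forces $y$ to be accomplished outside $S$, so the accomplished-$y$ region is exactly $S$; thus $\neg\Box Ey$ holds off $S$, while $Ey\wedge\neg Ex$ holds at $(m,h)$, and $h'$ witnesses $\neg\Box Ey$ at $m$ itself since $m\notin S$.

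What remains, and what I expect to be the main obstacle, is checking that this $Act$ genuinely defines a jstit model and that $\Box Ex$ holds at every $R$-accessible moment of $S$, so that $K(\Box Ex\vee\neg\Box Ey)$ is satisfied at $(m,h)$. Epistemic transparency follows from the $R_e$-closure of $S$ (Definition \ref{sigma}.2), and expansion and the divide constraint are routine given the two observations above. The delicate point is \emph{no new proofs guaranteed} paired with the demand for $\Box Ex$: the constraint forbids $x$ from being accomplished at a minimal entry point of $S$, which is precisely why $x$ must trail $y$ and why the presence of $y$ at such minimal points has to be witnessed instead by the seed. Verifying this amounts to running Claims 2--3 of Theorem \ref{soundness} backwards: I would trace an accomplished proof at a moment $M\in S$ downwards, splitting on whether $M$ is dense below or has an immediate predecessor, invoking Definition \ref{sigma}.4 together with Lemma \ref{theta} in the dense case and Definition \ref{sigma}.3 in the successor case, while the hypotheses $(\forall m_2\unlhd m_1)(\neg Next(m,m_2))$ and $(\forall m^*\in h')(Next(m,m^*)\Rightarrow m^*\notin S)$ supply exactly the density and the seed-consistency needed near $m$, playing the role that $\neg Next(m_0,\cdot)$ played in Lemma \ref{stit-falsify}.
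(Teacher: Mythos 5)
Your overall scaffolding agrees with the paper's proof: you extract the same witnesses $m$, $m_1$, $h'$, $S$ from the failure of \eqref{reg}, make the same choices of $\mathcal{E}$ and $V$, aim at the same falsifying pair $(m,h)$ with $h\in H_{m_1}$, and correctly identify $S$ as the intended accomplished region. The gap is in your definition of $Act$, and it is fatal. The paper puts $\{x,y\}$ into $Act(m'',g)$ whenever $m''\in S$ \emph{or} $g$ passes through an immediate successor of $m''$ lying in $S$; that is, both proofs are dynamically presented one $Next$-step \emph{before} a history enters $S$. Clause 3 of Definition \ref{sigma} guarantees this pre-presentation never leaks into accomplishment outside $S$ (if every history through $m''$ hit an immediate successor inside $S$, then $m''$ would already be in $S$), while the pre-presentation itself is exactly what witnesses the no-new-proofs-guaranteed constraint at the $S$-minimal moments. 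You have misread that constraint: it requires only that an accomplished proof occur in $Act(m',g)$ for \emph{some} $m'\lhd m''$ and \emph{some} $g\in H_{m''}$, not that it be accomplished at $m'$; so nothing ``forbids $x$ from being accomplished at a minimal entry point of $S$'', and the paper's construction does accomplish it there.

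Your fix --- letting $x$ trail $y$ by one step and letting the seed at $m$ witness $y$ --- fails on both counts. Consider any $M\in S$ with no strict predecessor in $S$; by Lemma \ref{theta} and the contrapositive of Definition \ref{sigma}.4 (plus no backward branching), such an $M$ has an immediate predecessor, and such $M$ exist in plenty of frames outside $\mathcal{F}^{Ag}_{reg}$. Under your $Act$, $y$ is accomplished at $M$ but $x$ is not, so $\Box Ey\wedge\neg\Box Ex$ holds at $M$ and $\Box Ex\vee\neg\Box Ey$ fails there; whenever such an $M$ is $R$-accessible from $m$ (e.g.\ lies in the future of $m$), $K(\Box Ex\vee\neg\Box Ey)$ fails at $(m,h)$ and your counterexample evaporates. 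Independently, your structure is not a jstit model: no new proofs guaranteed fails for $y$ at any such $M$ that does not sit immediately above $m$ on a history undivided from $h$, since no $m'\lhd M$ is in $S$ and the seed lives only at $m$; because $S$ is closed under $R_e$ (Definition \ref{sigma}.2), it will in general contain such remote entry points. Once you replace your $Act$ by the paper's two-clause version, the remaining verifications (the claims characterizing $Act_{m''}$, the four $Act$-dependent constraints, and the final evaluation at $(m,h)$) go through essentially as you sketch and as in Lemma \ref{stit-falsify}.
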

\begin{proof}
Assume that $F \notin \mathcal{F}^{Ag}_{reg}$. Then we can choose
$m_0,m_1 \in Tree$, $h' \in H_{m_0}$, and $S \in\bigcap_{m_0 \lhd
m \unlhd m_1}\Theta_m$ such that:
\begin{align}
&(m_0 \lhd m_1) \& m_0 \notin S \& (\forall g \in H_{m_1})(g
\not\approx_{m_0} h') \&\notag\\
&\qquad\qquad\& (\forall m' \in h')(Next(m_0,m') \Rightarrow m'
\notin S)\& (\forall m \unlhd m_1)(\neg Next(m_0, m)).\label{E:u2}
\end{align}
We now extend $F$ to $\mathcal{M}$ setting $\mathcal{E}(m,t) =
Form^{Ag}$ for all $m \in Tree$ and $t \in Pol$, and setting $V(p)
= \emptyset$ for all $p \in Var$. As for $Act$, we set as follows.
We first choose an arbitrary $h_2 \in H_{m_1}$. By Lemma
\ref{technical}.2 we know that also $h_2 \in H_{m_0}$. Now for an
arbitrary $m \in Tree$ we define that:
\begin{align*}
    Act(m,h) = \left\{%
\begin{array}{ll}
    \{ y \}, & \hbox{if $m = m_0$ and $h \approx_{m_0} h_2$;} \\
    \{ x,y \}, &\hbox{if }m \in S \vee \exists m'(m' \in h \cap S \& Next(m,m')); \\
    \emptyset, & \hbox{otherwise.} \\
\end{array}%
\right.
\end{align*}
It is obvious that every semantical constraint on jstit models is
satisfied, except possibly for the constraints invoking $Act$, and
it is also clear that such an $\mathcal{M}$ satisfies
$\mathcal{CS}$-normality condition for every possible constant
specification $\mathcal{CS}$.

As for $Act$ itself, we start by establishing the following
claims:

\emph{Claim 1}. $(\forall m \in Tree)(m \in S \Leftrightarrow
Act_m = \{ x,y \})$.

Indeed, whenever $m \in S$, we will have $Act(m,h) = \{ x,y \}$
for every $h \in H_m$ and hence $Act_m = \{ x,y \}$. In the other
direction, assume that for every $h \in H_m$ it is true that
$Act(m,h) = \{ x,y \}$. If $m \in S$, then we are done. If $m
\notin S$, then for every $h \in H_m$ we must have an $m_h \in h$
such that both $m_h \in S$ and $Next(m,m_h)$. But then, by
Definition \ref{sigma}.3, we must also have $m \in S$ despite our
initial assumption.

\emph{Claim 2}. $(\forall m \in Tree)(Act_m = \emptyset \vee Act_m
= \{ x,y \})$.

Indeed, if $m \neq m_0$ then for every $h \in H_m$ we will have
either $Act(m,h) = \emptyset$ or $Act(m,h) = \{ x,y \}$ just by
definition of $Act$ so that the claim is obviously true. And if $m
= m_0$. then we know that $Act(m_0, h') = \emptyset$ so that we
must have $Act_{m_0} = \emptyset$.

\emph{Claim 3}. Under the settings for $\mathcal{M}$ we have, for
an arbitrary $m \in Tree$:
\begin{align*}
    Act_m = \left\{%
\begin{array}{ll}
    \{ x,y \}, & m \in S; \\
    \emptyset, & \hbox{otherwise.} \\
\end{array}%
\right.
\end{align*}
Immediate from Claims 1 and 2.

\emph{Claim 4}. $(\forall m \in Tree)((m \in h_2 \& m \rhd m_0)
\Rightarrow m \in S)$.

 Indeed, if $m \in h_2$, then $m$ must be $\unlhd$-comparable to
 $m_1$. Now, if $m \unlhd m_1$, then $m_0 \lhd m \unlhd m_1$ so
 that $m \in S$ by Definition \ref{sigma}.1. On the other
 hand, if $m_1 \lhd m$, then note that we clearly have $m_1 \in S$
 by Definition \ref{sigma}.1. By the future always matters
 constraint and $R \subseteq R_e$ we further get $R_e(m_1, m)$, whence by Definition
 \ref{sigma}.2
 we again get $m \in S$.

We now look into the semantical constraints dependent on $Act$ in
some detail.

\textbf{Expansion of presented proofs}. Assume that $m \lhd m'$
and $h \in H_{m'}$. We have three cases to consider.

\emph{Case 1}. $Act(m, h) = \emptyset$. The constraint is verified
trivially.

\emph{Case 2}. $Act(m,h) = \{ y \}$. Then $m = m_0$ and $h
\approx_{m_0} h_2$. The latter means that we can choose an $m''
\rhd m = m_0$ such that $m'' \in h \cap h_2$. By Claim 4, we get
then that $m'' \in S$. Now, since also $m' \in h$, $m'$ and $m''$
must be $\unlhd$-comparable. If $m'' \lhd m'$, then $R_e(m'', m')$
by the future always matters constraint and $R \subseteq R_e$,
and, further, $m' \in S$ by Definition \ref{sigma}.2. If $m'
\unlhd m''$ then by Lemma \ref{technical}.2, we get that $m' \in
h_2$ and since also $m' \rhd m = m_0$, this means that  $m' \in S$
by Claim 4. Thus we get $m' \in S$ anyway, which means that
$Act(m',h) = \{ x,y \}$ and the constraint is satisfied.

\emph{Case 3}. $Act(m,h) = \{ x,y \}$. If $m \in S$ then also $m'
\in S$ by $R \subseteq R_e$, the future always matters constraint,
and Definition \ref{sigma}.2. On the other hand, if there exists
$m'' \in h$ such that $m'' \in S \& Next(m,m'')$, then $m'$ and
$m''$ are both in $h$ and must be $\unlhd$-comparable. We cannot
have $m' \lhd m''$ since by $Next(m,m'')$ this would mean that $m'
\unlhd m$, in contradiction with our assumptions. Therefore, we
must have $m'' \unlhd m'$, whence by $R \subseteq R_e$, the future
always matters constraint, and Definition \ref{sigma}.2 we again
get that $m' \in S$. Thus we get $m' \in S$ anyway, which means
that $Act(m',h) = \{ x,y \}$ and the constraint is satisfied.

\textbf{Presenting a new proof makes histories divide}. Assume
that $h, g \in H_m$ and that there exists an $m' \rhd m$ such that
$m' \in g \cap h$. We consider four cases according to the above
definition of $Act$:

\emph{Case 1}. $m \in S$. Then clearly $Act(m, h) = Act(m, g) = \{
x,y \}$ and the constraint is satisfied.

\emph{Case 2}. For some $m'' \in h$ it is true that $m'' \in S$
and $Next(m, m'')$. Then $Act(m, h) =  \{ x,y \}$, and also $m'$
and $m''$ must be $\unlhd$-comparable.  We cannot have $m' \lhd
m''$ since by $Next(m,m'')$ this would mean that $m' \unlhd m$, in
contradiction with our assumptions. Therefore, we must have $m''
\unlhd m'$, whence by Lemma \ref{technical}.2, we must have $m''
\in g$ so that we get $Act(m, g) = \{ x,y \}$ as well, and the
constraint is satisfied. A symmetrical (and similar) subcase would
start from the assumption that $m'' \in g$.

\emph{Case 3}. $m = m_0$ and $h \approx_{m_0} h_2$. Then $Act(m,
h) =  \{ y \}$. By Lemma \ref{technical}.3, we get that $g
\approx_{m_0} h_2$ so that $Act(m, g) = \{ y \}$ as well, and the
constraint is satisfied. Again, a symmetrical (and similar)
subcase would start from the assumption that $g \approx_{m_0}
h_2$.

\emph{Case 4}. None of the above cases applies either for $h$ or
for $g$. Then $Act(m, h) = Act(m, g) = \emptyset$ and the
constraint is satisfied.

\textbf{No new proofs guaranteed}. Let $m \in Tree$ be arbitrary.
If $Act_m = \emptyset$, then the constraint is satisfied
trivially. On the other hand, if $Act_m \neq \emptyset$, then, by
Claim 3, we must have both $m \in S$ and $Act_m = \{ x,y \}$. Then
we have to consider two cases:

\emph{Case 1}. There exists an $m' \lhd m$ such that $m' \in S$.
Then choose an arbitrary $h \in H_m$. We have $m' \in h$ by Lemma
\ref{technical}.2, and $Act(m', h) = \{ x,y \}$ by the above
definition of $Act$, so that the constraint is satisfied.

\emph{Case 2}. For all $m' \lhd m$ we have $m' \notin S$. Then, by
Definition \ref{sigma}.4 we must have:
$$
(\exists m_2 \lhd m)(\forall m_3 \lhd m)(\neg m_2 \lhd m_3).
$$
We choose such an $m_2$. Of course, whenever $m_3 \lhd m$, $m_3$
must be $\unlhd$-comparable to $m_2$ by the absence of backward
branching, therefore, given that we never have $m_2 \lhd m_3$, we
must get that:
$$
(\forall m_3 \lhd m)(m_3 \unlhd m_2).
$$
Adding this up with $m_2 \lhd m$, we get that $Next(m_2,m)$. Now,
choose an arbitrary $h \in H_m$. We have $m_2 \in h$ by Lemma
\ref{technical}.2 and $Act(m_2, h) = \{ x,y \}$ by the fact that
$m \in h \cap S \& Next(m_2,m)$ and the above definition of $Act$,
so that the constraint is again satisfied.

\textbf{Presented proofs are epistemically transparent}. Assume
that $m, m' \in Tree$ are such that $R_e(m,m')$. Then, if $Act_m =
\emptyset$, the constraint is satisfied trivially. On the other
hand, if $Act_m \neq \emptyset$, then, by Claim 3, we must have $m
\in S$. But then, by Definition \ref{sigma}.2, we will also have
$m' \in S$, and, by Claim 3, $Act_{m'} = Act_m = \{ x,y \}$ so
that the constraint is again satisfied.

Therefore, the above-defined $\mathcal{M}$ is shown to be a jstit
model for $Ag$ and we obviously have that:
$$
\mathcal{M}, m_0, h_2 \not\models K(\Box Ex \vee \neg\Box Ey) \to
(Ex \vee \neg Ey).
$$
Indeed, whenever $m' \in Tree$, then, by Claim 2 above, we will
either have $Act_{m'} = \emptyset$ (and then $\mathcal{M}, m', g
\models \neg\Box Ey$ for all $g \in H_{m'}$), or  $Act_{m'} = \{
x,y \}$ (and then $\mathcal{M}, m', g \models \Box Ex$ for all $g
\in H_{m'}$). Therefore, it is clear that we have:
$$
\mathcal{M}, m_0, h_2 \models K(\Box Ex \vee \neg\Box Ey),
$$
and yet, on the other hand it is true that:
$$
\mathcal{M}, m_0, h_2 \models \neg Ex \wedge Ey.
$$
\end{proof}
The frame definability result for jstit frames is now also
straightforward:
\begin{theorem}\label{frame2}
Let $F  = \langle Tree, \unlhd, Choice, R, R_e \rangle$ be a jstit
frame for $Ag$. For any constant specification $\mathcal{CS}$ it
is true that:
$$
(\forall \mathcal{M} \in Mod_{\mathcal{CS}}(\{ F \}))(\mathcal{M}
\models \{ A \in Form^{Ag} \mid \vdash_{\mathcal{CS}} A \})
\Leftrightarrow F \in \mathcal{F}^{Ag}_{reg}.
$$
\end{theorem}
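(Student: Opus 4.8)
The plan is to derive this theorem from the two substantial results already established, in exactly the manner in which Theorem \ref{frame1} was derived from Theorem \ref{completeness} and Lemma \ref{stit-falsify}; here the roles are played instead by Theorem \ref{soundness} and Lemma \ref{jstit-falsify}. The statement is a biconditional, so I would treat its two directions separately, observing at the outset that all the genuine model-theoretic labor has already been discharged by those two results, and that what remains is a matter of assembling them together with one purely syntactic observation.

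For the $(\Leftarrow)$-direction I would assume $F \in \mathcal{F}^{Ag}_{reg}$ and take an arbitrary $\mathcal{M} \in Mod_{\mathcal{CS}}(\{F\})$. Since $\mathcal{M}$ is then a $\mathcal{CS}$-normal jstit model based on a regular frame, we have $\mathcal{M} \in Mod_{\mathcal{CS}}(\mathcal{F}^{Ag}_{reg})$. By Theorem \ref{soundness}, every instance of \eqref{A0}--\eqref{A9} is valid over this class, and each of the rules \eqref{R1}, \eqref{R2}, \eqref{R_D}, \eqref{RCS} preserves validity over it; a routine induction on the length of a proof in $\Sigma_D(\mathcal{CS})$ then shows that every $A$ with $\vdash_{\mathcal{CS}} A$ is valid over $Mod_{\mathcal{CS}}(\mathcal{F}^{Ag}_{reg})$, and hence over $\mathcal{M}$ in particular. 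Thus $\mathcal{M} \models \{A \in Form^{Ag} \mid \vdash_{\mathcal{CS}} A\}$, as required.

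For the $(\Rightarrow)$-direction I would argue contrapositively, assuming $F \notin \mathcal{F}^{Ag}_{reg}$. The one syntactic ingredient I need is that
$$
\vdash_{\mathcal{CS}} K(\Box Ex \vee \neg\Box Ey) \to (Ex \vee \neg Ey),
$$
which was already noted in the proof of Theorem \ref{frame1}: it follows by a single application of \eqref{R_D} to an appropriate instance of \eqref{A7}. Now Lemma \ref{jstit-falsify} applies directly to $F$ and yields a $\mathcal{CS}$-normal jstit model $\mathcal{M}$ based on $F$, together with a pair $(m,h) \in MH(\mathcal{M})$ at which this very formula fails. Hence this $\mathcal{M} \in Mod_{\mathcal{CS}}(\{F\})$ fails to validate a theorem of $\Sigma_D(\mathcal{CS})$, so the left-hand side of the biconditional is false, completing the contrapositive.

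Since both directions reduce to invoking Theorem \ref{soundness} and Lemma \ref{jstit-falsify}, the proof itself presents no real obstacle; the difficulty of the result lives entirely inside those two prior statements --- the delicate verification that \eqref{R_D} preserves validity over regular frames (Theorem \ref{soundness}) and the intricate $Act$-construction falsifying the test formula on a non-regular frame (Lemma \ref{jstit-falsify}). The only point demanding a moment's care in stitching them together is the check that the formula refuted in Lemma \ref{jstit-falsify} is genuinely a $\Sigma_D(\mathcal{CS})$-theorem, which is precisely the syntactic observation recalled above.
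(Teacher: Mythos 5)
Your proof is correct and follows essentially the same route as the paper, which simply says ``same as for Theorem \ref{frame1}, using Theorem \ref{j-completeness} and Lemma \ref{jstit-falsify} in place of Theorem \ref{completeness} and Lemma \ref{stit-falsify}.'' The only cosmetic difference is that for the $(\Leftarrow)$-direction you appeal to Theorem \ref{soundness} directly (via induction on proof length) rather than routing through Theorem \ref{j-completeness}, whose relevant half is itself just an application of Theorem \ref{soundness}, so the content is identical.
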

\begin{proof}
Same as for Theorem \ref{frame1}, using Theorem
\ref{j-completeness} and Lemma \ref{jstit-falsify} in place of
Theorem \ref{completeness} and Lemma \ref{stit-falsify},
respectively.
\end{proof}

\section{Conclusions and further research}\label{conclusion}

We have established that $\Sigma_D$, our axiomatization of stit
logic of justification announcements from Part I, has a reasonably
clear-cut meaning (given by condition \eqref{mixsucc} in
Definition \ref{classes-stit} above) when it comes to restrictions
induced by it on the temporal substructure of the underlying
frame. The fact that $\Sigma_D$, as it follows from the main
result of Part I, cannot distinguish between mixed successor
frames and a group of other stronger restrictions all the way up
to discrete time structures underscores the limitations of
expressive power of JA-STIT. We have also seen that once the
epistemic accessibility relations enter the picture, the
complexity of the restriction on frames imposed by $\Sigma_D$ goes
up significantly. One may even question the possible utility of
such a complex defining condition as an insight into the nature of
JA-STIT. We believe, however, that the notion of the family of
sets $\Theta_m$ for a given moment is interesting at least in that
the claims we have established in the course of proofs of Theorems
\ref{soundness} and \ref{frame2} given above apparently suggest
that these families allow one to more or less characterize, for a
given finite set of proof polynomials $\sigma$, the set of moments
$m$ in a given jstit model for which we have $\sigma \subseteq
Act_m$ without mentioning $Act$ at all. This by-product of the
above results probably holds some potential for further research
in this direction.

Additionally, the above results lay down a basis for similar
enquiries into the expressive powers of some natural extensions of
JA-STIT, like the logic of $E$-notions (see \cite{OLWA2}) or the
full basic jstit logic introduced in \cite{OLWA}.

\section{Acknowledgements}
To be inserted.

}

\end{document}